\definecolor{gG}{RGB}{ 60, 186,  84}
\definecolor{dY}{RGB}{224,165,38}
\definecolor{gB}{RGB}{48., 88.6667, 158.}
\definecolor{lightBlue}{RGB}{173, 216, 230}
\definecolor{gR}{RGB}{219,  50,  54}
\pgfplotsset{compat=1.16}
\tikzstyle{mycircle}=[circle,draw=black!80,thick, minimum size=1em]
\newtheorem{thm}{Theorem}[section]
\newtheorem{theorem}[thm]{Theorem}
\newtheorem{main theorem}[thm]{Main Theorem}
\newtheorem*{main}{Main Theorem}
\newtheorem{lemma}[thm]{Lemma}
\newtheorem{prop}[thm]{Proposition}
\theoremstyle{definition}
\newtheorem{remark}{Remark}
\newcommand{\newreptheorem}[2]{\newtheorem*{rep@#1}{\rep@title}\newenvironment{rep#1}[1]{\def\rep@title{#2 \ref*{##1}}\begin{rep@#1}}{\end{rep@#1}}}
\newcommand{\avar}[0]{\alpha}
\newcommand{\cvar}[0]{c}
\newcommand{\Dvar}[0]{D}
\newcommand{\yvar}[0]{y}
\newcounter{relctr} 
\everydisplay\expandafter{\the\everydisplay\setcounter{relctr}{0}}
\newcommand{\PPr}{\text{Pr}}
\title{Uniqueness of the Gibbs measure for the anti-ferromagnetic Potts model on the infinite $\Delta$-regular tree for large $\Delta$}
\author{Ferenc Bencs\footnote{Korteweg de Vries Institute for Mathematics, University of Amsterdam. P.O. Box 94248 1090 GE Amsterdam  The Netherlands. Email: \texttt{ferenc.bencs@gmail.com}. Funded by the Netherlands Organisation of Scientific Research (NWO): VI.Vidi.193.068}, 
David de Boer\footnote{Korteweg de Vries Institute for Mathematics, University of Amsterdam. P.O. Box 94248 1090 GE Amsterdam  The Netherlands. Email: \texttt{daviddeboer2795@gmail.com}. Funded by the Netherlands Organisation of Scientific Research (NWO): 613.001.851}, 
Pjotr Buys\footnote{Korteweg de Vries Institute for Mathematics, University of Amsterdam. P.O. Box 94248 1090 GE Amsterdam  The Netherlands. Email: \texttt{pjotr.buys@gmail.com}. Funded by the Netherlands Organisation of Scientific Research (NWO): 613.001.851}, 
Guus Regts\footnote{Korteweg de Vries Institute for Mathematics, University of Amsterdam. P.O. Box 94248 1090 GE Amsterdam  The Netherlands. Email: \texttt{guusregts@gmail.com}. Funded by the Netherlands Organisation of Scientific Research (NWO): VI.Vidi.193.068}}
\date{\today}
\begin{document}

\maketitle 

\begin{abstract}
In this paper we prove that for any integer $q\geq 5$, the anti-ferromagnetic $q$-state Potts model on the infinite $\Delta$-regular tree has a unique Gibbs measure for all edge interaction parameters $w\in [1-q/\Delta,1)$, provided $\Delta$ is large enough. This confirms a longstanding folklore conjecture. 
\\
\quad \\{\bf Keywords.} Gibbs measure, anti-ferromagnetic Potts model, infinite regular tree
\end{abstract}

\section{Introduction}

The Potts model is a statistical model, originally invented to study ferromagnetism~\cite{potts}; it also plays a central role in probability theory, combinatorics and computer science, see e.g.~\cite{Sokaltutte} for background. 

Let $G = (V,E)$ be a finite graph. The anti-ferromagnetic Potts model on the graph $G$ has two parameters, a number of \emph{states, or colors,} $q\in \mathbb{Z}_{\geq 2}$ and an edge interaction parameter $w=e^{kJ/T}$, with $J<0$ being a coupling constant, $k$ the Boltzmann constant and $T$ the temperature.
The case $q=2$ is also known as the zero-field Ising model.
A \emph{configuration} is a map $\sigma:V \to [q]:=\{1,\dots,q\}$. 
Associated with such a configuration is the \emph{weight}
$w^{m(\sigma)}$, where $m(\sigma)$ is the number of edges  $e=\{u,v\}\in E$ for which $\sigma(u)=\sigma(v)$.
There is a natural probability measure, the \emph{Gibbs measure} $\PPr_{G;q,w}[\cdot]$, on the collection of configurations $\Omega=\{\sigma:V \to [q]\}$ in which a configuration is sampled proportionally to its weight.
Formally, for a given configuration $\phi:V\to [q]$ the probability that a random configuration ${\bf \Phi}$\footnote{ We use the convention to denote random variables with capitals in boldface.} is equal to $\phi$, is given by
\begin{equation}\label{eq:gibbs finite}
\PPr_{G;q,w}[{\mathbf \Phi}=\phi] = \frac{w^{m(\phi)}}{ \sum_{ \sigma:V \rightarrow [q]} w^{m(\sigma)}},
\end{equation}
here the denominator is called \emph{partition function} of the model and we denote it by $Z(G;q,w)$ (or just $Z(G)$ if $q$ and $w$ are clear form the context).

In statistical physics the Potts model is most frequently studied on infinite lattices, such as $\mathbb{Z}^2$. 
At the cost of introducing some measure theory, the notion of a Gibbs measure can be extended to such infinite graphs, see e.g. \cite{brigt99, bright02,friedli2017}.
While at any temperature the Gibbs measure on a finite graph is unique, this is no longer the case for all infinite lattices. 
The transition from having a unique Gibbs measure to multiple Gibbs measures in terms of the temperature is referred to as a \emph{phase transition} in statistical physics~\cite{Georgii,friedli2017} and it is an important problem to determine the exact temperature, the \emph{critical temperature, $T_c$}, at which this happens. 
There exist predictions for the critical temperature on several lattices in the physics literature by Baxter~\cite{BaxterZd,Baxterq} (see also~\cite{SalasSokal} for more details and further references), but it turns out to be hard to prove these rigorously cf.~\cite{SalasSokal}. 

In the present paper we consider the anti-ferromagnetic Potts model on the infinite $\Delta$-regular tree, $\mathbb{T}_\Delta=(V,E)$, also known as the \emph{Bethe lattice}, or \emph{Cayley tree}. 
We briefly recall the formal definition of a Gibbs measure in this situation following~\cite{brigt99, bright02}. See~\cite{Roz21} for a survey on this topic in general. 

The sigma algebra is generated by sets of the form $U_{\sigma}:=\{\phi:V\to [q]\mid \phi\!\restriction_U=\sigma\}$, where $U\subset V$ is a finite set and $\sigma:U\to [q]$.
Let $w\in (0,1)$.
A probability measure $\mu$ on this sigma algebra is then called a Gibbs measure for the $q$-state anti-ferromagnetic Potts model on $\mathbb{T}_\Delta$ at $w$, if for all finite $U\subset V$ and $\mu$-a.e. $\phi:V\to [q]$ the following holds
\[
\PPr_\mu[\mathbf{\Phi}\!\restriction_{U^\circ}=\phi\!\restriction_{U^\circ}\mid \mathbf{\Phi}\!\restriction_{V\setminus U^\circ}=\phi\!\restriction_{V\setminus U^\circ}]
=\PPr_{\mathbb{T}_\Delta[U];q,w}[\mathbf{\Phi}\!\restriction_{U^\circ}=\phi|_{U^\circ}\mid \mathbf{\Phi}\!\restriction_{\partial U}=\phi|_{\partial U}],
\]
where $\partial U$ denotes the collection of vertices in $U$ that have a neighbor in $V\setminus U$ and $U^\circ:=U\setminus \partial U$. 
We note that the probability in the right-hand side of this equation is determined in the finite graph induced by $U$, $\mathbb{T}_\Delta[U]$.
Moreover, we note that for any $w\in (0,1)$ there exists at least one such Gibbs measure.

For a number of states $q\geq 2$ define 
\[
w_c:=\max\{0,1-\frac{q}{\Delta}\}.
\]
It is a longstanding folklore conjecture (cf.\cite[page 746]{blanca2020}) that the Gibbs measure is unique if and only if $w\geq w_c$ (where the inequality should be read as strict if $q=\Delta$.)
We note that using the well known Dobrushin uniqueness theorem, one obtains uniqueness of the Gibbs measure provided $w>1-\tfrac{q}{2\Delta}$ cf.~\cite{Borgsetal,SalasSokal}, which is still far way from the conjectured threshold.
The conjecture was confirmed by Jonasson for the case $w=0$~\cite{jon02}, by Srivastava, Sinclair and Thurley ~\cite{sst14} for $q=2$ (see also~\cite{Georgii}; in this case one can map the model to a ferromagnetic model since the tree is bipartite, which is much better understood), 
by Galanis, Goldberg and Yang for $q=3$~\cite{PottsLeslie3} and by three of the authors of the present paper for $q=4$ and $\Delta\geq 5$~\cite{deboer2020uniqueness}. 
Our main result is a confirmation of this conjecture for all $q\geq 5$ provided the degree of the tree is large enough.

\begin{main}
For each integer $q\geq 5$ there exists $\Delta_0 \in \mathbb{N}$ such that for each $\Delta \geq 
\Delta_0$ and each $w\in [w_c,1)$ the $q$-state anti-ferromagnetic Potts model with edge interaction parameter $w$ has a unique Gibbs measure on the infinite $\Delta$-regular tree $\mathbb{T}_{\Delta}$.
\end{main}

It has long been known that there are multiple Gibbs measures when $w<w_c$~\cite{Peruggietalgeneral,Peruggietaldiagrams}, see also~\cite{gal15}) and~\cite{Roz1,Roz2,Roz3,Roz4}.
We will briefly indicate below Lemma~\ref{lem:treerecursion} how one can deduce this.
Our main results therefore pinpoints the critical temperature for the anti-ferromagnetic Potts model on the infinite regular tree for large enough degree.
For later reference we will refer to $w_c$ as the \emph{uniqueness threshold}.

In Theorem~\ref{thm:main} below, we will reformulate our main theorem in terms of the conditional distribution of the color of the root vertex of $\mathbb{T}_{\Delta}$ conditioned on a fixed coloring of the vertices at a certain distance from the root, showing that this distribution converges to the uniform distribution as the distance tends to infinity.
We in fact show that this convergence is exponentially fast for subcritical $w$ (i.e. $w>w_c$).

\subsection{Motivation from computer science}
There is a surprising connection between phase transitions on the infinite regular tree and transitions in the computational complexity of approximately computing partition function of $2$-state models (not necessarily the Potts model) on bounded degree graphs. 
For parameters inside the uniqueness region there is an efficient algorithm for this task~\cite{weitz06,licor,sst14}, while for parameters for which there are multiple Gibbs measures on the infinite regular tree, the problem is NP-hard~\cite{Slysun,gal16}. 
It is conjectured that a similar phenomenon holds for a larger number of states.

While the picture for $q$-state models for $q\geq 3$ is far from clear, some progress has been made on this problem for the anti-ferromagnetic Potts model.
On the hardness side, Galanis, \v{S}tefankvovi\v{c} and Vigoda~\cite{gal15} showed that for even numbers $\Delta\geq 4$ and any integer $q\geq 3$, approximating the partition function of the Potts model $Z(G;q,w)$ is NP-hard on the family of graphs of maximum degree $\Delta$ for any $0\leq w< 1-q/\Delta=w_c$, which we now know to be the uniqueness threshold (for $\Delta$ large enough).
On the other side, much less is known about the existence of efficient algorithms for approximating $Z(G;q,w)$ or sampling from the measure $\PPr_{G;q,w}$ for the class of bounded degree graphs when $w>w_c$. Implicit in~\cite{bencs} there is an efficient algorithm for this problem whenever $1- \alpha q/\Delta <w\leq 1$, with $\alpha=1/e$, which has been improved to $\alpha=1/2$ in~\cite{liu2019correlation}.

For random regular graphs of large enough degree, our main result implies an efficient randomized algorithm to approximately sample from the Gibbs measure $\PPr_{G;q,w}$ for any $w_c<w\leq 1$ by a result of Blanca, Galanis, Goldberg, \v{S}tefankovi\v{c}, Vigoda and Yang~\cite[Theorem 2.7]{blanca2020}. See also~\cite{chen2023strong} for a very recent improvement.
In~\cite{eft2020}, Efthymiou proved a similar result for Erd\H{o}s-R\'enyi random graphs without the assumption that $w_c$ is equal to the uniqueness threshold on the tree.
At the very least this indicates that the uniqueness threshold on the infinite regular tree plays an important role in the study of the complexity of approximating the partition function of and sampling from the Potts model on bounded degree graphs.

\subsection{Approach}
Our approach to prove the main theorem is based on the approach from~\cite{deboer2020uniqueness} for the  cases $q=3,4$.
As is well known, to prove uniqueness it suffices to show that for a given root vertex, say $v$, the probability that $v$ receives a color $i\in [q]$, conditioned on the event that the vertices at distance $n$ from $v$ receive a fixed coloring, converges to $1/q$ as $n\to \infty$ regardless of the fixed coloring of the vertices at distance $n$.
Instead of looking at these probabilities, we look at ratios of these probabilities.
It then suffices to show that these converge to $1$.
The ratios at the root vertex $v$ can be expressed as a rational function of the ratios at the neighbors of $v$. 
See Lemma~\ref{lem:treerecursion} below.
This function is rather difficult to analyze directly and as in~\cite{deboer2020uniqueness} we analyze a simpler function coupled with a geometric approach.
A key new ingredient of our approach is to take the limit of $\Delta$, the degree of the tree, to infinity and analyze the resulting function. 
This function turns out be even simpler and behaves much better in a geometric sense.
With some work we translate the results for the limit case back to the finite case and therefore obtain results for $\Delta$ large enough.
This is inspired by a recent paper~\cite{bencs2021limit} in which this idea was used to give a precise description of the location of the zeros of the independence polynomial for bounded degree graphs of large degree.

\subsection*{Organization}
In the next section we give a more technical overview of our approach.
In particular we recall some results from~\cite{deboer2020uniqueness} that we will use and set up some terminology.
We also gather two results that will be used to prove our main theorem, leaving the proofs of these results to Section~\ref{sec:convex} and Section~\ref{sec:forward} respectively. Assuming these results, the main theorem will be proved in Subsection~\ref{ssec:proof}.

\section{Preliminaries, setup and proof outline}\label{sec:technial outline}
\subsection{Reformulation of the main result}
We will reformulate our main theorem here in terms of the conditional distribution of the color of the root vertex of $\mathbb{T}_{\Delta}$ conditioned on a fixed coloring of the vertices at a certain distance from the root.

Let $\Delta\geq 2$ be an integer.
In what follows it will be convenient to write $d=\Delta-1$. 
For a positive integer $n$ we denote by $\mathbb{T}^n_{d+1}$ the finite tree obtained from $\mathbb{T}_{d+1}$ by fixing a root vertex $r$, deleting all vertices at distance more than $n$ from the root, deleting one of the neighbors of $r$ and keeping the connected component containing $r$.
We denote the set of leaves of $\mathbb{T}^n_{d+1}$ by $\Lambda_{n}$, except when $n=0$, in which case we let $\Lambda_{0}=\{r\}$.
For a positive integer $q$ we call a map $\tau:\Lambda_{n}\to [q]$ a \emph{boundary condition at level $n$}.

The following theorem may be seen as a more precise form of our main result.
\begin{theorem}\label{thm:main}
Let $q\geq 3$ be a positive integer. There exist constants $C>0$ and $d_0>0$ such that for all integers $d\geq d_0$ and all $\alpha\in (0,1)$ the following holds for any $i\in \{1,\ldots,q\}$:
 \begin{equation}\label{equation:uniqueness critical}
        \lim_{n \to \infty} \max_{\tau: \Lambda_{n} \to [q]} \bigg\vert \PPr_{\mathbb{T}^n_{d+1},q,w_c} [ {\bf \Phi}(r) = i \ \vert\ {\bf\Phi}\!\restriction_{ \Lambda_{n}} = \tau] - \frac{1}{q}\bigg\vert = 0,
    \end{equation}
    for any boundary condition $\tau$ at level $n$ and edge interaction $w(\alpha)=1-\tfrac{\alpha q}{d+1}$,
    \begin{equation}\label{equation:uniqueness subcritical}
\bigg\vert \PPr_{\mathbb{T}^n_{d+1},q,w(\alpha)} [ {\bf \Phi}(r) = i \ \vert\ {\bf\Phi}\!\restriction_{ \Lambda_{n}} = \tau]-\frac{1}{q}\bigg\vert  \leq C\alpha^{n/2}.     
    \end{equation}
\end{theorem}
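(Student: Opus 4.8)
The plan is to translate both assertions into a single statement about iterating the tree recursion of Lemma~\ref{lem:treerecursion} towards its uniform fixed point. Put $d=\Delta-1$; every internal vertex of $\mathbb{T}^{n}_{d+1}$ (the root included, after the deletion of one neighbour) has exactly $d$ children, so writing $\mathbf{R}^{(n)}=(R^{(n)}_{1},\dots,R^{(n)}_{q})$ for the vector of ratios $R^{(n)}_{i}=\PPr[{\bf\Phi}(r)=i\mid {\bf\Phi}\!\restriction_{\Lambda_n}=\tau]\big/\PPr[{\bf\Phi}(r)=q\mid {\bf\Phi}\!\restriction_{\Lambda_n}=\tau]$ we have $\mathbf{R}^{(n)}=F_{d}\big(\mathbf{R}^{(n-1)}_{1},\dots,\mathbf{R}^{(n-1)}_{d}\big)$ for the recursion map $F_{d}$ of Lemma~\ref{lem:treerecursion}, where $\mathbf{R}^{(n-1)}_{j}$ is the ratio vector produced by $\tau$ restricted to the $j$-th pendant copy of $\mathbb{T}^{n-1}_{d+1}$. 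The uniform distribution corresponds to $\mathbf{R}^{(n)}=\mathbf{1}$, and $\big|\PPr[{\bf\Phi}(r)=i\mid\cdots]-1/q\big|$ is at most a fixed multiple of $\|\mathbf{R}^{(n)}-\mathbf{1}\|$ on any compact set of ratios, so \eqref{equation:uniqueness critical} and \eqref{equation:uniqueness subcritical} follow once we show $\|\mathbf{R}^{(n)}-\mathbf{1}\|\to 0$ uniformly over $\tau$, with decay $O(\alpha^{n/2})$ when $\alpha\in(0,1)$. Exploiting the $S_{q}$-symmetry of the colours I would track a single symmetric potential $\Phi(\mathbf{R}^{(n)})\ge 0$ vanishing exactly at $\mathbf{1}$ rather than $\mathbf{R}^{(n)}$ itself.

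The argument then has two phases, and this is precisely where the two results gathered in Section~\ref{sec:technial outline} (and proved in Sections~\ref{sec:convex} and~\ref{sec:forward}) enter. First a \emph{trapping} step: regardless of $\tau$, one level of the recursion already forces $\mathbf{R}^{(1)}$ into a fixed compact region $K\ni\mathbf{1}$ — each factor entering $F_{d}$ lies between $w^{d}$ and $1$, so every coordinate of $\mathbf{R}^{(1)}$ lies in $[w^{d},w^{-d}]$, and $w^{d}\to e^{-\alpha q}$, so these bounds depend only on $q$ (uniformly in $\alpha\in(0,1]$) — and $K$ can be chosen $F_{d}$-invariant. Second a \emph{contraction} step on $K$: there, measured through $\Phi$, the map satisfies $\Phi(F_{d}(\cdots))\le \sqrt{\alpha}\,\cdot\max_{j}\Phi(\mathbf{R}_{j})$ (the neutral inequality $\le\max_{j}\Phi(\mathbf{R}_{j})$ when $\alpha=1$). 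A convexity property of the relevant region or of $\Phi\circ F_{d}$ in the $d$ child-slots (the Section~\ref{sec:convex} result) is what reduces this to an extremal computation: the worst configuration of children is the symmetric one (all $d$ children equal), or one at the extreme points of $K$, turning the estimate into a one-parameter inequality. Chaining the phases, $n$ levels decompose into an $n$-independent burn-in into $K$ followed by $\Theta(n)$ contracting steps, which gives \eqref{equation:uniqueness subcritical} with an explicit $C$ (uniform in $\alpha$), and at $\alpha=1$ the strictly-but-not-geometrically decreasing potential gives \eqref{equation:uniqueness critical}.

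The ingredient that makes all of this accessible for large $\Delta$ — and is the genuinely new idea, following the philosophy of~\cite{bencs2021limit} — is to prove the trapping and contraction statements first for the formal $d\to\infty$ limit $F_{\infty}$ of $F_{d}$, obtained by replacing each factor $\big(1-\tfrac{\alpha q}{d+1}x\big)^{d}$ appearing in $F_{d}$ by $e^{-\alpha q x}$ (and adjusting the normalisation accordingly). For $F_{\infty}$ the trap region, the potential and the contraction factor all take a clean closed form, one can establish the contraction (and the convexity) with a \emph{strict} inequality, and the linearisation at $\mathbf{1}$ is transparent (contracting by $\approx\alpha$, exactly neutral at $\alpha=1$). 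One then transfers to finite $d$: the fact that $F_{d}\to F_{\infty}$ uniformly on compacta, together with uniform control of first (and second) derivatives, lets the strict inequalities for $F_{\infty}$ survive an $O(1/d)$ perturbation, which is what pins down $d_{0}$.

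The main obstacle I expect is the critical case $\alpha=1$ together with the choice of potential. Because the linearisation of $F_{\infty}$ at $\mathbf{1}$ is exactly neutral there, convergence has to be extracted from the second-order behaviour, so one must exhibit a potential — presumably convex and adapted to the precise shape of $K$ — that is \emph{strictly} decreased by $F_{\infty}$ even along the neutral directions; finding such a $\Phi$, and proving the convexity statement that licenses the reduction to extremal children in $q\ge 5$ dimensions, is where I expect most of the difficulty to lie. Making the $d=\infty\to$ finite-$d$ transfer quantitative rather than merely asymptotic is a secondary complication stemming from the same multidimensionality. Once the two gathered results are available, however, the deduction of Theorem~\ref{thm:main} is the short chaining argument sketched above.
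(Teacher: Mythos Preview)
Your high-level plan --- trap into a compact $S_q$-symmetric region, prove strict shrinking for the $d\to\infty$ limit $F_\infty$, then transfer to finite $d$ by continuity, and chain --- is exactly the paper's strategy, and the closing paragraph correctly identifies where the real work lies. But two of the specific mechanisms you propose differ from what the paper does, and in one place your account is off in a way that matters.

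First, the ``potential'' is concrete: the paper uses the nested polytopes $P_c=\bigcap_{\pi\in S_q}\pi\cdot H_{\ge -c}$ in log-ratio space, with the scalar $c$ playing the role of your $\Phi$. Second, and more importantly, the convexity result (Proposition~\ref{prop:convex}) is \emph{not} used to reduce to an extremal child configuration. The point is that Lemma~\ref{lem:treerecursion} expresses the root's log-ratio as a \emph{convex combination} $\tfrac{1}{d}\sum_j F_{d,\alpha}(\mathbf{x}_j)$ of the children's images, so to propagate an inclusion $\mathcal{R}_n\subseteq\mathcal{T}_n$ one needs $\mathcal{T}_{n+1}$ to be convex and to contain $F_{d,\alpha}(\mathcal{T}_n)$; Proposition~\ref{prop:convex} says $F_d(P_c)$ is itself convex, which lets one take $\mathcal{T}_{2n}=F_d(P_{c_{n-1}})$ without passing to a hull. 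Third, the contraction is genuinely \emph{two-step}: Proposition~\ref{prop:2stepforward} gives $F_d^{\circ 2}(P_c)\subseteq P_{c'}$ with $c'<c$, whereas one application of $F_d$ need not shrink $P_c$ (the derivative at $\mathbf{0}$ is $-\mathrm{Id}$). Finally, the subcritical rate $\alpha^{n/2}$ is not obtained by proving a direct contraction with factor $\sqrt{\alpha}$; instead the paper uses the scaling identity $F_{d,\alpha}=\tfrac{d}{d'}F_{d'}$ with $d/d'\le\alpha$ (Lemma~\ref{lem:increasingd and alpha}) to reduce the subcritical map to a rescaling of the critical map at a larger effective degree, yielding $F_{d,\alpha}^{\circ 2}(P_c)\subseteq \alpha P_c$ in one line.
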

\begin{remark}\label{rem:strengthen}
We can in fact strengthen \eqref{equation:uniqueness subcritical} in two ways. 
First of all, for any $\alpha<\hat{\alpha}<1$ there exists a constant $C_{\hat{\alpha}}>0$ such that the right-hand side of~\eqref{equation:uniqueness subcritical} can be replaced by $C_{\hat{\alpha}}\hat{\alpha}^n$.
Secondly, for any fixed $d\geq d_0$ there exist a constant $C_d>0$ such that the right-hand side of~\eqref{equation:uniqueness subcritical} can be replaced by $C_{d}\alpha^n$.
\end{remark}

As is well known (see e.g.~\cite[Lemma 1.3]{deboer2020uniqueness}\footnote{The proof of that lemma in the published version of that paper contains an error; this is corrected in a more recent arXiv version: \texttt{arXiv:2011.05638 v3}.}) Theorem~\ref{thm:main} directly implies our main theorem. 
Therefore the remainder of the paper is devoted to proving Theorem~\ref{thm:main}.

We now outline how we do this.

\subsection{Log-ratios of probabilities}
Theorem~\ref{thm:main} is formulated in terms of certain conditional probabilities. 
For our purposes it turns out to be convenient to reformulate this into \emph{log-ratios} of these probabilities. 
To introduce these, we recall some relevant definitions from~\cite{deboer2020uniqueness}.
Throughout we fix an integer $q\geq 3$.

Given a (finite) graph $G=(V,E)$ and a subset $U \subseteq V$ of vertices, we call $\tau:U \to [q]$ a \emph{boundary condition} on $G$. 
We say vertices in $U$ are \emph{fixed} and vertices in $V\setminus U$ are \emph{free}.
The \emph{partition function restricted to} $\tau$ is defined as
\[
Z_{U,\tau}(G;q,w) = \sum_{\substack{ \sigma:V \rightarrow [q]\\ \sigma\restriction_{\!U} = \tau}} w^{m(\sigma)}.
\]
We just write $Z(G)$ if $U,\tau$ and $q,w$ are clear from the context. 
Given a boundary condition $\tau:U \to [q]$, a free vertex $v \in V \setminus U$ and a state $i \in [q]$ we define $\tau_{v,i}$ as the unique boundary condition on $U\cup\{v\}$ that extends $\tau$ and associates $i$ to $v$.
When $U$ and $\tau$ are clear from the context, we will denote $Z_{U \cup \{v\},\tau_{v,i}}(G)$ as $Z^{v}_{i}(G)$. Let $\tau:U\to [q]$ be a boundary condition and $v \in V$ be a free vertex. 
For any $i \in [q]$ we define the \emph{log-ratio} $\tilde{R}_{G,v,i}$ as
\[
\tilde{R}_{G,v,i}: =  \log(Z^{v}_{i}(G))-\log(Z^{v}_{q}(G)),
\]
where $\log$ denotes the natural logarithm.
Note that $\tilde{R}_{G,v,q}=0$.
We moreover remark that $\tilde{R}_{G,v,i}$ can be interpreted as the logarithm of the ratio of the probabilities that the root gets color $i$ (resp. $q$) conditioned on the event that $U$ is colored according to $\tau$.

For trees the log-ratios at the root vertex can be recursively computed from the log-ratios of its neighbors.
To describe this compactly we introduce some notation that will be used extensively throughout the paper. 
Fix $d \in \mathbb{R}_{>1}$ and let $\avar\in (0,1]$. Define the maps $G_{d,\avar;i},F_{d,\avar;i}:\mathbb{R}^{q-1} \rightarrow \mathbb{R}$ for $i \in \{1,\ldots,q-1\}$ as 
\begin{equation}\label{eq:define Gd coordinate}
G_{d,\avar;i}(x_1,\ldots,x_{q-1}) =   \frac{1-x_i}{\sum_{j=1}^{q-1}x_j + 1-  \frac{\avar \cdot q}{d+1}}
\end{equation}
and
\begin{equation}\label{eq:define Fd coordinate}
F_{d,\avar;i}(x_1,\ldots,x_q) = d \log\left(1+\frac{\avar \cdot q}{d+1} \cdot G_{d,\avar;i}(\exp(x_1),\ldots,\exp(x_{q-1}))\right).
\end{equation}
Define the map $F_{d,\avar}: \mathbb{R}^{q-1} \rightarrow \mathbb{R}^{q-1}$ whose $i$th coordinate function is given by $F_{d,\avar;i}(x_1,\ldots,x_{q-1})$ and define $G_{d,\avar}$ similarly.
To suppress notation we write $F_d=F_{d,1}$ and $G_d = G_{d,1}$. We also define $\exp(x_1, \ldots, x_{q-1}) = (\exp(x_1), \ldots, \exp(x_{q-1}))$ and $\log(x_1, \ldots, x_{q-1}) = (\log(x_1), \ldots, \log(x_{q-1}))$.
We note that $G_{d,\alpha}$ and $F_{d,\alpha}$ are analytic in $1/d$ near $0$ when viewing $d$ as a variable.
We will now use the map $F_{d,\alpha}$ to give a compact description of the tree recurrence for log-ratios.

\begin{lemma}\label{lem:treerecursion}
    Let $T = (V,E)$ be a tree, $\tau:U \to [q]$ a boundary condition on $U \subsetneq V$. Let $v$ be a free vertex of degree $d\geq 1$ with neighbors $v_1, \dots, v_d$.
   Denote $T_i$ for the tree that is the connected component of $T - v$ containing $v_i$. 
   Restrict $\tau$ to each $T_i$ in the natural way.  
   Write $\tilde{R}_{i,j}$ for the log-ratio $\tilde{R}_{T_i,v_i,j}$.
    Then for $\avar$ such that $w=1-\tfrac{\avar\cdot q}{d+1}$,
    \begin{equation}\label{eq:logtreeformula}
    (\tilde{R}_{T,v,1},\ldots, \tilde{R}_{T,v,q-1}) = \sum_{i=1}^{d} \frac{1}{d} F_{d,\avar}(\tilde{R}_{i,1},\ldots,\tilde{R}_{i,q-1}),
\end{equation}
a convex combination of the images of the map $F_{d,\alpha}$.
\end{lemma}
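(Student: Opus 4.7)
The plan is a direct calculation: expand the restricted partition function at the root in terms of the partition functions of the subtrees, form the ratio defining $\tilde{R}_{T,v,j}$, and then match the resulting algebraic expression with the definition of $F_{d,\avar}$ term by term.

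First, I would exploit the tree structure. Since removing $v$ disconnects $T$ into the subtrees $T_1,\ldots,T_d$, summing over the color assigned to each neighbor $v_i$ (with a factor $w$ whenever $v_i$ receives the same color as $v$) yields
\[
Z^{v}_{j}(T) \;=\; \prod_{i=1}^{d} \Bigl( (w-1)\, Z^{v_i}_{j}(T_i) \;+\; \sum_{k=1}^{q} Z^{v_i}_{k}(T_i) \Bigr).
\]
Forming the ratio $Z^{v}_{j}(T)/Z^{v}_{q}(T)$ and dividing each factor by $Z^{v_i}_{q}(T_i)$, I would use the identity $\exp(x_{i,k}) = Z^{v_i}_{k}(T_i)/Z^{v_i}_{q}(T_i)$ together with $\exp(x_{i,q})=1$ to rewrite the $i$th factor as
\[
\frac{(w-1)\exp(x_{i,j}) + \sum_{k=1}^{q-1}\exp(x_{i,k}) + 1}{(w-1) + \sum_{k=1}^{q-1}\exp(x_{i,k}) + 1}.
\]

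The next step is the algebraic identification. Subtracting one from this fraction and substituting $w-1 = -\avar q/(d+1)$ gives
\[
1 + \frac{(1-w)\bigl(1-\exp(x_{i,j})\bigr)}{\sum_{k=1}^{q-1}\exp(x_{i,k}) + w} \;=\; 1 + \frac{\avar q}{d+1}\cdot G_{d,\avar;j}\bigl(\exp(x_{i,1}),\ldots,\exp(x_{i,q-1})\bigr),
\]
where one recognizes the denominator $\sum_{k} \exp(x_{i,k}) + w = \sum_{k}\exp(x_{i,k}) + 1 - \avar q/(d+1)$ exactly as it appears in \eqref{eq:define Gd coordinate}. Taking logarithms of the product over $i=1,\ldots,d$ and then artificially splitting each summand as $d\cdot\tfrac{1}{d}\log(\,\cdot\,)$ produces, via the definition \eqref{eq:define Fd coordinate}, exactly the convex combination $\sum_{i=1}^d \tfrac{1}{d}F_{d,\avar;j}(x_{i,1},\ldots,x_{i,q-1})$. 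Doing this for each coordinate $j\in\{1,\ldots,q-1\}$ completes the proof.

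The main obstacle is not conceptual but purely notational: this is the standard tree recursion for the Potts model rewritten in logarithmic coordinates with the normalization $w = 1-\avar q/(d+1)$, and the only care required is in matching denominators and the factor $1/d$ so that the recursion appears manifestly as an average of $F_{d,\avar}$-values, which is the form needed for the geometric/convexity arguments in the later sections.
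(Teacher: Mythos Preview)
Your proposal is correct and follows essentially the same approach as the paper: the paper's proof simply invokes the well-known recursion for ratios (their equation~\eqref{eq:treeformula}) and says \eqref{eq:logtreeformula} follows after taking logarithms, while you spell out that same recursion from the partition-function factorization and carry out the algebraic identification with $G_{d,\avar}$ and $F_{d,\avar}$ explicitly. There is no substantive difference in method, only in the level of detail.
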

\begin{proof}
By focusing on the $j$th entry of the left-hand side and substituting $ R_{T,v,j}:=\exp(\tilde{R}_{T,v,j}) $, we see that \eqref{eq:logtreeformula} follows from the well known recursion for ratios 
\begin{equation}\label{eq:treeformula}
    R_{T,v,i} = \prod_{s=1}^{d} \frac{\sum_{l \in [q-1]  \setminus \{i\}}  R_{T_s,v_s,l} + w  R_{T_s,v_s,i} + 1}{\sum_{l \in [q-1] }  R_{T_s,v_s,l} + w}.
\end{equation}
See e.g.~\cite{deboer2020uniqueness} for a proof of this.
\end{proof}
We note that if the boundary condition $\tau$ is constant on the leaves of the tree $\mathbb{T}_{d+1}^n$, then the log-ratios at the root can be obtained by iterating the univariate function $f$ given by $f(x)=F_{d,\alpha}(x,\ldots,x)$ at $w=w(\alpha)$. The point $x=0$ is a fixed point of $f$; it satisfies $|f'(0)|\leq 1$ if and only if $w\geq w_c$. 
From this it is not difficult to extract that there exist multiple Gibbs measures when $w<w_c$. 

Denote $\mathbf{0}$ for the zero vector in $\mathbb{R}^{q-1}$. (Throughout we will denote vectors in boldface.)
We define for any $n \geq 1$ the set of possible log-ratio vectors 
\[
\mathcal{R}_n := \{(\tilde{R}_{\mathbb{T}_{d+1}^n,r,1}, \ldots, \tilde{R}_{\mathbb{T}_{d+1}^n,r,q-1}) \in \mathbb{R}^{q-1}| \tau:\Lambda_n \rightarrow [q] \}.
\]
Here the ratios $\tilde{R}_{\mathbb{T}_{d+1}^n,r,1}$ depend on $\tau$ but this is not visible in the notation.
The following lemma shows how the recursion from Lemma \ref{lem:treerecursion} will be used.

\begin{lemma}
    \label{lem: Tab sequence}
    Let $q\geq 3$ and $d\geq 2$ be integers. If there exists a sequence $\{\mathcal{T}_n\}_{n\geq 1}$ of convex subsets of $\mathbb{R}^{q-1}$ with the following properties:
    \begin{enumerate}
        \item \label{it:basecase}
        $\mathcal{R}_1 \subseteq \mathcal{T}_1$,
        \item \label{it:inductionstep}
        for every $n \geq 1$, $F_d(\mathcal{T}_n) \subseteq \mathcal{T}_{n+1}$,
        \item \label{it:laststep}
        for every $\epsilon > 0$ there is an $N \geq 1$ such that for all $n\geq N$, $\sup_{\bf{r}\in \mathcal{T}_n} \|\mathbf{r}\|_1 \leq \varepsilon$, 
    \end{enumerate}
then     
\begin{equation}
        \lim_{n \to \infty} \max_{\tau: \Lambda_{n,d+1} \to [q]} \bigg\vert \PPr_{\mathbb{T}^n_{d},q,w_c} [ {\bf \Phi}(r) = i \ \vert\ {\bf\Phi}\!\restriction_{ \Lambda_{n,d}} = \tau] - \frac{1}{q}\bigg\vert = 0.
    \end{equation}
\end{lemma}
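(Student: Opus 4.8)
The plan is to reduce the probabilistic statement to a statement purely about the log-ratio vectors and then feed it through the three hypotheses on the sequence $\{\mathcal{T}_n\}$. First I would recall that by the interpretation of $\tilde{R}_{G,v,i}$ as the logarithm of the ratio of conditional probabilities, one has, for the root $r_n$ of $\mathbb{T}^n_{d+1}$ and any boundary condition $\tau:\Lambda_n\to[q]$,
\[
\PPr_{\mathbb{T}^n_{d+1},q,w_c}[{\bf\Phi}(r)=i\mid {\bf\Phi}\!\restriction_{\Lambda_n}=\tau]=\frac{\exp(\tilde{R}_{\mathbb{T}^n_{d+1},r_n,i})}{\sum_{j=1}^{q}\exp(\tilde{R}_{\mathbb{T}^n_{d+1},r_n,j})},
\]
where we set $\tilde{R}_{\mathbb{T}^n_{d+1},r_n,q}=0$. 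Thus the probability vector is the image of the log-ratio vector $\mathbf{x}=(\tilde{R}_{\mathbb{T}^n_{d+1},r_n,1},\ldots,\tilde{R}_{\mathbb{T}^n_{d+1},r_n,q-1})\in\mathcal{R}_n$ under the softmax map $\mathbf{x}\mapsto\bigl(e^{x_1},\ldots,e^{x_{q-1}},1\bigr)/\bigl(1+\sum_j e^{x_j}\bigr)$, which is continuous on $\mathbb{R}^{q-1}$ and sends $\mathbf{0}$ to the uniform vector $(1/q,\ldots,1/q)$. Hence it suffices to show that $\sup_{\mathbf{x}\in\mathcal{R}_n}\|\mathbf{x}\|\to 0$ as $n\to\infty$.

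Next I would establish, by induction on $n$, that $\mathcal{R}_n\subseteq\mathcal{T}_n$ for all $n\geq 1$. The base case $n=1$ is hypothesis~\ref{it:basecase}. For the inductive step, fix a boundary condition $\tau:\Lambda_{n+1}\to[q]$ and apply Lemma~\ref{lem:treerecursion} to the tree $T=\mathbb{T}^{n+1}_{d+1}$ with $v=r_{n+1}$ the root, which is a free vertex of degree $d$: each component $T_i$ of $T-v$ is isomorphic to $\mathbb{T}^n_{d+1}$ with the restricted boundary condition, so the vector of log-ratios $(x_{i,1},\ldots,x_{i,q-1})$ at $v_i$ lies in $\mathcal{R}_n\subseteq\mathcal{T}_n$ by the inductive hypothesis. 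With $\avar=1$ (so $w=1-q/(d+1)=w_c$), equation~\eqref{eq:logtreeformula} expresses the log-ratio vector at $r_{n+1}$ as the average $\tfrac1d\sum_{i=1}^d F_d(x_{i,1},\ldots,x_{i,q-1})$, a convex combination of $d$ points of $F_d(\mathcal{T}_n)\subseteq\mathcal{T}_{n+1}$ (using hypothesis~\ref{it:inductionstep}); since $\mathcal{T}_{n+1}$ is convex, this average lies in $\mathcal{T}_{n+1}$. As $\tau$ was arbitrary, $\mathcal{R}_{n+1}\subseteq\mathcal{T}_{n+1}$.

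Finally I would combine the two facts: given $\epsilon>0$, hypothesis~\ref{it:laststep} gives an $M$ with every element of $\mathcal{T}_m$ within distance $\epsilon$ of $\mathbf{0}$ for all $m\geq M$, and since $\mathcal{R}_m\subseteq\mathcal{T}_m$ the same holds for every log-ratio vector in $\mathcal{R}_m$. Pushing this through the continuity of the softmax map at $\mathbf{0}$ (uniform continuity on the closed ball of radius $1$, say) yields that for $m$ large the conditional probability $\PPr_{\mathbb{T}^m_{d+1},q,w_c}[{\bf\Phi}(r)=i\mid{\bf\Phi}\!\restriction_{\Lambda_m}=\tau]$ is within any prescribed error of $1/q$, uniformly over $\tau$, which is exactly the claimed limit. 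I do not anticipate a genuine obstacle here — this lemma is essentially a bookkeeping device — but the one point requiring care is the verification that each component $T_i$ of $\mathbb{T}^{n+1}_{d+1}-r_{n+1}$ is genuinely a copy of $\mathbb{T}^n_{d+1}$ (so that the recursion closes on the sets $\mathcal{R}_n$), together with checking that the degree bookkeeping matches the convention $d=\Delta-1$ and that $\avar=1$ indeed corresponds to $w=w_c$; the real content of the argument is of course deferred to constructing the sequence $\{\mathcal{T}_n\}$ satisfying the three hypotheses, which is done elsewhere in the paper.
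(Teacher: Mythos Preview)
Your proposal is correct and follows precisely the natural argument the paper has in mind: the paper itself omits the proof, saying only that it is analogous to Lemma~2.3 of~\cite{deboer2020uniqueness}, and your write-up is exactly that argument (softmax reduction, induction $\mathcal{R}_n\subseteq\mathcal{T}_n$ via Lemma~\ref{lem:treerecursion} and convexity, then hypothesis~\ref{it:laststep} plus continuity). The bookkeeping you flag---that removing the root from $\mathbb{T}^{n+1}_{d+1}$ yields $d$ copies of $\mathbb{T}^n_{d+1}$ and that $\avar=1$ gives $w=w_c$---is handled correctly.
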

\begin{proof}
The proof is straightforward and analogous to the proof of Lemma 2.3 in \cite{deboer2020uniqueness} and we therefore omit it.
\end{proof}
We note that the lemma is only stated for $\alpha=1$. An analogues statement for $\alpha\in (0,1)$ and $F_d$ replaced by $F_{d,\alpha}$ with a more accurate dependence of $N$ on $\varepsilon$ follows from a certain monotonicity of $F_{d,\alpha}$, as will be explained in the proof of Theorem~\ref{thm:main} below.

In the next section we construct a family of convex sets that allows us to form a sequence $\{\mathcal{T}_n\}_{n\geq 1}$ with the properties required by the lemma. 

\subsection{Construction of suitable convex sets}
We need the standard $q-2$-simplex, which we denote as
\[
\Delta=\left\{(t_1,\ldots,t_{q-2},1-\sum_{i=1}^{q-2}t_i)\mid t_i\geq 0 \text{ for all $i$, }\sum_{i=1}^{q-2}t_i\leq 1\right\}.
\]

The symmetric group $S_q$ acts on $\mathbb{R}^q$ by permuting entries of vectors. 
Consider $\mathbb{R}^{q-1}\subset \mathbb{R}^q$ as the subspace spanned by $\{\mathbf{e}_1 - \mathbf{e}_q, \ldots, \mathbf{e}_{q-1} - \mathbf{e}_q$\}, where $\mathbf{e}_i$ denotes the $i$th standard base vector in $\mathbb{R}^q$. 
This induces a linear action of $S_q$ on $\mathbb{R}^{q-1}$, also known as the the standard representation of $S_q$ and denoted by $\mathbf{x}\mapsto \pi\cdot \mathbf{x}$ for $\mathbf{x}\in \mathbb{R}^{q-1}$ and $\pi\in S_q$.
The following lemma shows that the map $F_{d,\avar}$ is $S_q$-equivariant for any $\avar \in (0,1]$, essentially because the action permutes the $q$ colors of the Potts model and no color plays a special role.

\begin{lemma}\label{lem:symmetry}
For any $\pi \in S_q$, any $\avar \in (0,1]$, any $\mathbf{x} \in \mathbb{R}^{q-1}$ and any $d$ we have \[\pi \cdot F_{d,\avar}(\mathbf{x}) =  F_{d,\avar}(\pi \cdot \mathbf{x}).\] 
\end{lemma}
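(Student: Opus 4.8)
The plan is to reduce the $S_q$-equivariance of $F_{d,\avar}$ to the $S_{q-1}$-equivariance (in an appropriate sense) of $G_{d,\avar}$, and then to handle $S_{q-1}$ together with one extra transposition that generates $S_q$. Since $F_{d,\avar;i}$ depends on its arguments only through $G_{d,\avar;i}(\exp(x_1),\ldots,\exp(x_{q-1}))$ and the scalar map $t\mapsto d\log(1+\tfrac{\avar q}{d+1}t)$ is applied coordinatewise, it suffices to prove that the map $\mathbf{y}\mapsto(G_{d,\avar;1}(\mathbf{y}),\ldots,G_{d,\avar;q-1}(\mathbf{y}))$, viewed on the coordinates $y_1,\ldots,y_{q-1}$ with the convention $y_q:=1$, transforms correctly under $S_q$. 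The key bookkeeping point is the identification of $\mathbb{R}^{q-1}$ with the subspace of $\mathbb{R}^q$ spanned by $\mathbf{e}_i-\mathbf{e}_q$: a log-ratio vector $\mathbf{x}=(x_1,\ldots,x_{q-1})$ corresponds to $\sum_i x_i(\mathbf{e}_i-\mathbf{e}_q)$, i.e. to the equivalence class of $(x_1,\ldots,x_{q-1},0)$ modulo the all-ones vector, and exponentiating gives $(e^{x_1},\ldots,e^{x_{q-1}},1)$ up to a global scalar. Both $G_{d,\avar;i}$ and the subsequent $\log(1+\cdot)$ composition are invariant under this global scaling in the right way once one checks that $G_{d,\avar;i}$ is homogeneous of degree $0$ jointly in $(y_1,\ldots,y_q)$ after inserting $y_q$ — and indeed the numerator $1-x_i$ becomes $y_q-y_i$ and the denominator $\sum_{j\le q-1}x_j+1-\tfrac{\avar q}{d+1}$ becomes $\sum_{j\le q}y_j-\tfrac{\avar q}{d+1}y_q$ when $y_q=1$, but this is not degree-$0$ because of the constant term. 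So the cleaner route is to not homogenize but instead directly verify the two defining relations.

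Concretely, I would first establish the statement for $\pi=\pi_0$ a transposition fixing $q$, say $\pi_0=(a\,b)$ with $a,b\in\{1,\ldots,q-1\}$. On the standard representation this $\pi_0$ simply swaps coordinates $a$ and $b$ of $\mathbf{x}\in\mathbb{R}^{q-1}$. One checks that $G_{d,\avar;i}(\ldots,y_a,\ldots,y_b,\ldots)$ with $a,b$ swapped equals $G_{d,\avar;\pi_0(i)}(\ldots,y_a,\ldots,y_b,\ldots)$: the denominator is symmetric in all of $y_1,\ldots,y_{q-1}$ and hence unchanged, while the numerator $1-y_i$ picks up exactly the index permutation, so $G_{d,\avar;i}(\pi_0\cdot\exp(\mathbf{x}))=G_{d,\avar;\pi_0^{-1}(i)}(\exp(\mathbf{x}))$, and pushing this through $F$ gives $F_{d,\avar}(\pi_0\cdot\mathbf{x})=\pi_0\cdot F_{d,\avar}(\mathbf{x})$. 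Then I would handle one generating transposition that moves $q$, say $\pi_1=(q-1\,\,q)$; under the standard representation this acts on $\mathbb{R}^{q-1}$ by $\mathbf{x}\mapsto\mathbf{x}'$ where $x'_{q-1}=-x_{q-1}$ and $x'_j=x_j-x_{q-1}$ for $j<q-1$ (the reflection coming from $\mathbf{e}_{q-1}\leftrightarrow\mathbf{e}_q$). The corresponding statement for exponentials is: replacing $(e^{x_1},\ldots,e^{x_{q-1}})$ by $(e^{x_1}/e^{x_{q-1}},\ldots,e^{x_{q-2}}/e^{x_{q-1}},1/e^{x_{q-1}})$ and then applying $G_{d,\avar;i}$ should reproduce, after dividing through numerator and denominator by $e^{x_{q-1}}$... — this is precisely the invariance under global scaling of the vector $(y_1,\ldots,y_{q-1},y_q)$ that I flagged above, and the routine computation is: $G_{d,\avar}$ of the scaled vector equals the unscaled $G_{d,\avar}$ with indices $q-1$ and $q$ interchanged, provided one notes $G_{d,\avar;q}$ must be defined as $\tfrac{1-y_q}{\ldots}$ with $y_q$ no longer forced to $1$; working projectively removes the asymmetry. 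Since transpositions $(a\,b)$ with $a,b<q$ together with $(q-1\,\,q)$ generate $S_q$, equivariance for these generators gives it for all $\pi\in S_q$ by composing: if $\pi=\pi^{(1)}\cdots\pi^{(k)}$ then $F_{d,\avar}(\pi\cdot\mathbf{x})=F_{d,\avar}(\pi^{(1)}\cdots\pi^{(k)}\cdot\mathbf{x})=\pi^{(1)}\cdot F_{d,\avar}(\pi^{(2)}\cdots\pi^{(k)}\cdot\mathbf{x})=\cdots=\pi\cdot F_{d,\avar}(\mathbf{x})$.

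The main obstacle, and the only place real care is needed, is the explicit description of how the transposition $(q-1\,\,q)$ (or any transposition involving $q$) acts in the chosen coordinates on $\mathbb{R}^{q-1}$, and matching that up with what happens to $\exp(\mathbf{x})$. It is tempting to sidestep this by a slicker argument: observe that $F_{d,\avar}$ is built, via the $S_q$-invariant scalar operation $\log$ of an affine function, from the ``color-ratio'' recursion \eqref{eq:treeformula}, which is manifestly symmetric under relabeling the $q$ colors because $m(\sigma)$ is; hence $\tilde R_{T,v,i}$ transforms by permuting the color index $i$ under any $\pi\in S_q$, and this is exactly the $S_q$-action on $\mathbb{R}^{q-1}$ via the standard representation. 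This conceptual argument is essentially what I would write as the clean proof, using Lemma~\ref{lem:treerecursion} to transfer it to $F_{d,\avar}$, with the coordinate computation above relegated to a sentence or two as confirmation. I expect the write-up to be short: one display for the action of a transposition fixing $q$, one short computation (or appeal to the color-symmetry of \eqref{eq:treeformula}) for a transposition moving $q$, and the generation argument.
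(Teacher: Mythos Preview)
Your approach is correct and is almost certainly what the cited argument in \cite{deboer2020uniqueness} does: check equivariance on a generating set of $S_q$ (transpositions fixing $q$, plus one transposition moving $q$), with the manifest color-symmetry of the tree recursion \eqref{eq:treeformula} as the conceptual engine and the coordinate computation as confirmation. The present paper gives no proof beyond that citation, so there is nothing substantive to compare.

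One bookkeeping point to clean up in your write-up. Your ``i.e.'' conflates two identifications of $\mathbb{R}^{q-1}$ with the standard representation that are \emph{not} the same: the subspace embedding $\mathbf{x}\mapsto\sum_i x_i(\mathbf{e}_i-\mathbf{e}_q)=(x_1,\ldots,x_{q-1},-\sum_i x_i)$ and the quotient identification $\mathbf{x}\mapsto[(x_1,\ldots,x_{q-1},0)]\in\mathbb{R}^q/\mathbb{R}\mathbf{1}$. These two vectors in $\mathbb{R}^q$ differ by $(\sum_i x_i)\mathbf{e}_q$, which is not a multiple of $\mathbf{1}$, so for $q\ge 3$ they induce genuinely different (though conjugate) concrete $S_q$-actions on $\mathbb{R}^{q-1}$. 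Your formula $x'_{q-1}=-x_{q-1}$, $x'_j=x_j-x_{q-1}$ for the transposition $(q{-}1\ q)$ is the quotient action, and your verification shows that this is exactly the action for which $F_{d,\avar}$ is equivariant --- as it must be, since this is the action coming from permuting colors in the log-ratios $\tilde R_i=\log Z^v_i-\log Z^v_q$. Just use that description consistently and drop the claim that it coincides with the subspace embedding as literally stated.
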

\begin{proof}
This follows as in Section 3.1 in \cite{deboer2020uniqueness}. 
\end{proof}

Define for $\cvar\geq 0$ the half space 
\begin{equation}\label{eq:half space}
H_{\geq -\cvar}:=\left\{\mathbf{x}\in \mathbb{R}^{q-1}\mid \sum_{i=1}^{q-1} x_i\geq-\cvar\right\}.
\end{equation}
Define the set
\begin{equation}\label{eq:def P_c}
P_\cvar = \bigcap_{\pi \in S_q} \pi \cdot H_{\geq -\cvar}.
\end{equation}
Note that for each $\cvar \geq 0$ the set $P_\cvar$ equals the convex polytope 
\[
\text{conv} \big(\{(-\cvar,0,\ldots, 0), \ldots (0,\ldots,0,-\cvar),(\cvar,\ldots,\cvar)\} \big) .
\]
Denote $D_\cvar:= \text{conv} \big(\{(-\cvar,0,\ldots, 0), \ldots (0,\ldots,0,-\cvar),(0,\ldots,0)\} \big)$. 
Then we have
\begin{equation}\label{eq:fund domain}
P_\cvar = \bigcup_{\pi \in S_q}\pi \cdot D_\cvar.
\end{equation}
We refer to $D_\cvar$ as the fundamental domain of the action of $S_q$ on $\mathbb{R}^{q-1}$.

The following two propositions capture the image of $P_c$ under applications of the map $F_d$.

\begin{prop}\label{prop:convex}
Let $q\geq 3$ be an integer. Then there exists $d_1>0$ such that for all $d\geq d_1$ and $c\in [0,q+1]$, $F_d(P_c)$ is convex.
\end{prop}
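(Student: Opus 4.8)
\textbf{Proof proposal for Proposition~\ref{prop:convex}.}

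The plan is to exploit that $F_d$ is analytic in $1/d$ and reduce the problem to the limiting map $F_\infty:=\lim_{d\to\infty}F_d$, which from \eqref{eq:define Fd coordinate} has a much simpler form: since $d\log(1+\tfrac{\avar q}{d+1}z)\to \avar q\cdot z$ as $d\to\infty$ (with $\avar=1$ here), we get
\[
F_{\infty;i}(x_1,\ldots,x_{q-1}) = q\cdot G_{\infty;i}(\exp(x_1),\ldots,\exp(x_{q-1})) = \frac{q(1-\exp(x_i))}{\sum_{j=1}^{q-1}\exp(x_j)+1}.
\]
First I would prove the statement for $F_\infty$ in place of $F_d$: that $F_\infty(P_c)$ is convex for every $c\in[0,q+1]$, in fact with some quantitative margin (e.g. that the image is \emph{strictly} convex, or that its boundary has everywhere-positive curvature, or that it is star-shaped with respect to an interior point in a robust way). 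Then I would argue that convexity of the image is an open condition that is stable under small analytic perturbations of the map, uniformly over the compact parameter range $c\in[0,q+1]$ and over $P_c$ (which is a compact set depending continuously on $c$), so that $F_d(P_c)$ remains convex once $1/d$ is small enough, i.e. for all $d\geq d_1$.

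The key steps, in order, would be: (1) Write down $F_\infty$ explicitly and record that $F_d\to F_\infty$ in $C^k$ on compact sets, uniformly, with the error controlled by $1/d$ — this is immediate from analyticity in $1/d$ noted after \eqref{eq:define Fd coordinate}, together with Cauchy estimates. (2) Using the $S_q$-equivariance (Lemma~\ref{lem:symmetry}) and the decomposition $P_c=\bigcup_{\pi}\pi\cdot D_c$ from \eqref{eq:fund domain}, reduce to understanding $F_\infty$ on the fundamental domain $D_c$ and how the $q$ pieces $\pi\cdot F_\infty(D_c)$ fit together around the image of the symmetric point. (3) Prove convexity of $F_\infty(P_c)$. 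The natural route is to show the boundary $\partial(F_\infty(P_c))$ is the image of $\partial P_c$ and that it bounds a convex region: one can parametrise $\partial P_c$ by its faces (each face is a simplex spanned by a subset of the vertices $(-c,0,\ldots,0),\ldots,(c,\ldots,c)$), compute $F_\infty$ on each face, and check that the resulting hypersurface is convex, e.g. by showing the Gauss map is a bijection onto the sphere or that a suitable support-function description holds. In low dimension ($q=3$, so $\mathbb{R}^2$) this is a planar-curve curvature computation; for general $q$ one wants a coordinate-free argument — perhaps showing $F_\infty$ is a diffeomorphism from a neighborhood of $P_c$ onto its image and that the image of each supporting hyperplane of $P_c$ stays on one side, or invoking that $F_\infty$ composed with the affine structure has a convexity-preserving structure (note $G_{\infty;i}$ is a ratio of an affine function and a positive affine function of the $\exp(x_j)$, i.e. a projective-linear map postcomposed with $\exp$, and $\exp(P_c)$ is itself a nice log-convex body). (4) Upgrade to a \emph{quantitative} statement: show $F_\infty(P_c)$ has, say, second fundamental form bounded below by a positive constant $\kappa(c)$ with $\inf_{c\in[0,q+1]}\kappa(c)>0$. (5) Perturbation: for $d$ large, $F_d$ is $C^2$-close to $F_\infty$ uniformly on a neighborhood of $\bigcup_{c\le q+1}P_c$, hence the image hypersurface $F_d(\partial P_c)$ has second fundamental form within $O(1/d)$ of that of $F_\infty(\partial P_c)$, still positive-definite for $d\geq d_1$; since it is also the boundary of the image of a connected compact set and $F_d$ is a local diffeomorphism there, this forces $F_d(P_c)$ to be convex.

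The main obstacle I expect is step (3)–(4): verifying convexity of $F_\infty(P_c)$ (with a curvature margin) in a way that is genuinely uniform in $c\in[0,q+1]$ and clean in all dimensions $q-1$, rather than a case analysis. The corners of $P_c$ (the vertices and lower-dimensional faces) are delicate — one must check not only that each facet maps to a convex patch but that adjacent patches meet convexly along the images of the edges, and that the images of the $q$ translates $\pi\cdot D_c$ assemble without creating a re-entrant corner at $F_\infty(\mathbf{0})$ or along the shared boundaries. I would handle this by working on the fundamental domain, using equivariance to transport the estimates, and by checking the ``edge'' convexity conditions via the explicit formula for $F_\infty$; as a fallback, if the clean argument resists, one can note that for fixed $q$ the set of $c$ is a compact interval and split off the (finitely many) degenerate values of $c$ — but the analyticity in $c$ should make that unnecessary. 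The degenerate endpoint $c=0$ (where $P_0=\{\mathbf 0\}$ and the image is a point) is trivially convex, and continuity/compactness in $c$ handles a neighborhood of it.
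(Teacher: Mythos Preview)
Your overall architecture --- prove convexity for $F_\infty$ with a margin, then perturb in $1/d$ --- is exactly what the paper does. But you are working with the wrong decomposition of $P_c$, and this is precisely why you hit the obstacle you describe in steps (3)--(4).

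You use the union description $P_c=\bigcup_\pi \pi\cdot D_c$ from \eqref{eq:fund domain}, which forces you to analyze the image of the polytope $P_c$ facet by facet and then check that the smooth pieces fit together convexly along all the ridges and vertices. As you correctly anticipate, this corner analysis is delicate in general dimension, and the ``second fundamental form bounded below'' argument does not apply at the non-smooth strata of $\partial P_c$. The paper avoids this completely by using the \emph{other} description, \eqref{eq:def P_c}: $P_c=\bigcap_\pi \pi\cdot H_{\geq -c}$. The point is that $H_{\geq -c}$ has no corners, and --- this is the key observation you nearly stumble on in your parenthetical remark about $G_\infty$ being projective-linear --- the set $\exp(H_{\geq -c})$ is strictly convex by a one-line application of the strict concavity of $\log$, so $F_\infty(H_{\geq -c})=G_\infty(\exp(H_{\geq -c}))$ is strictly convex because fractional linear maps preserve convexity. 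One then checks (a gradient computation) that the boundary hypersurface $F_\infty(H_{-c})$ is nowhere tangent to the direction $\mathbf{1}$, so $F_\infty(H_{\geq -c})$ can be written as the hypograph of a strictly concave function over a domain in $H_0$. The perturbation to finite $d$ is then done at the level of this single concave function: the negative-definite Hessian persists for small $1/d$ by the implicit function theorem and compactness, uniformly in $c\in[0,q+1]$. Finally one shows $F_d(P_c)=\bigcap_\pi \pi\cdot\text{hypo}(h_{1/d,c})$, an intersection of convex sets, hence convex. The corners of $P_c$ never enter the argument; the $S_q$-intersection at the very end handles them for free.

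So: your plan is not wrong, but the decomposition you chose makes the hard part genuinely hard, whereas the half-space decomposition makes it almost trivial. Switch to \eqref{eq:def P_c} and your obstacle disappears.
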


\begin{prop}\label{prop:2stepforward}
Let $q\geq 3$ be an integer. There exists $d_2>0$ such that for all $d\geq d_2$ the following holds:
for any $c\in(0,q+1]$ there exists $0<c'<c$ such that
\[
F_d^{\circ 2}(P_c)\subseteq P_{c'}.
\]
\end{prop}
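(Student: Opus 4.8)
The plan is to exploit the $S_q$-equivariance from Lemma~\ref{lem:symmetry} together with the limiting behaviour of $F_d$ as $d\to\infty$. By equivariance it suffices to control where $F_d$ sends the fundamental domain $D_c$, and by the decomposition $P_c=\bigcup_{\pi}\pi\cdot D_c$ it suffices to find, for each $c\in(0,q+1]$, some $c'<c$ (that may be chosen uniformly in $d$ on the relevant range, or at least for all $d\ge d_2$) such that $F_d^{\circ 2}(D_c)\subseteq P_{c'}$. Since $P_{c'}=\bigcap_{\pi}\pi\cdot H_{\ge -c'}$, this amounts to showing that every point $\mathbf{y}\in F_d^{\circ 2}(D_c)$ satisfies $\sum_i (\pi\cdot\mathbf{y})_i\ge -c'$ for all $\pi\in S_q$; equivalently, in the $\mathbb{R}^q$-picture where $F_d$ acts on the differences of a $q$-tuple $(u_1,\dots,u_q)$, one must bound the largest ``gap'' $\max_k u_k-\min_k u_k$ after two applications away from $c$ by a definite amount.

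The key step is an analysis of the \emph{limit map} $F_\infty:=\lim_{d\to\infty}F_d$. Taking $d\to\infty$ in \eqref{eq:define Gd coordinate}--\eqref{eq:define Fd coordinate}, the prefactor $\tfrac{\alpha q}{d+1}$ inside $\log(1+\cdots)$ linearizes and cancels with the outer factor $d$, so $F_{\infty;i}(\mathbf{x})=\tfrac{q}{q}\cdot\big(\text{something like } q\,G_{\infty;i}(\exp\mathbf{x})\big)$ — more precisely $F_{d,\alpha}\to q\cdot G_{\infty}(\exp(\cdot))$ pointwise, with $G_\infty$ the $\alpha=0$ version of \eqref{eq:define Gd coordinate}. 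This limit is much more tractable: one computes $F_\infty$ directly on $D_c$, shows it is a strict contraction of the gap on the compact range $c\in(0,q+1]$, i.e. there is $\lambda<1$ with $F_\infty^{\circ 2}(P_c)\subseteq P_{\lambda c}$ for all such $c$ (the two-step is needed because a single application may fix the all-colors-equal direction or behave neutrally to first order; composing kills this). Because $P_c$ is compact, $c\mapsto$ (gap of $F_\infty^{\circ 2}(P_c)$) is continuous, so one gets a uniform gain $c'\le\lambda c<c$. One should double-check the endpoint $c\to 0^+$: there $F_\infty$ is well-approximated by its linearization at $\mathbf{0}$, whose relevant eigenvalues on the standard representation must be shown to lie strictly inside the unit disk after squaring.

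Finally I would transfer the limit statement to finite $d$. Since $F_{d,\alpha}$ is analytic in $1/d$ (as noted after \eqref{eq:define Fd coordinate}) and $P_c$ is compact, $F_d\to F_\infty$ uniformly on $\bigcup_{c\in[0,q+1]}P_c$ together with first derivatives; hence for $d$ large enough (this defines $d_2$) the gap of $F_d^{\circ 2}(P_c)$ is within $\tfrac{1-\lambda}{2}c$-say of that of $F_\infty^{\circ 2}(P_c)$, so still bounded by $c'':=\tfrac{1+\lambda}{2}c<c$. Convexity of the image is not needed here (that is Proposition~\ref{prop:convex}); we only need the containment $F_d^{\circ 2}(P_c)\subseteq P_{c''}$, which follows from the half-space description since checking $\sum_i(\pi\cdot\mathbf y)_i\ge -c''$ for all $\pi$ and all $\mathbf y$ in the image is a closed condition stable under small perturbations of the map. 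The main obstacle I anticipate is the explicit contraction estimate for $F_\infty^{\circ 2}$ on $D_c$ uniformly down to $c=0$: one must rule out neutral directions, which requires an honest (if elementary) computation of the derivative of $F_\infty$ at $\mathbf{0}$ on the standard representation of $S_q$ and a verification that its square is a genuine contraction — this is where the hypothesis $q\ge 3$ and the precise form of the uniqueness threshold $w_c=1-q/\Delta$ really enter.
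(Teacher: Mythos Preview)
Your overall architecture---reduce via $S_q$-equivariance to $D_c$, analyse the limit map $F_\infty$, then transfer to finite $d$ by uniform convergence---matches the paper. The genuine gap is in the small-$c$ regime, and it is exactly the place you flagged as ``the main obstacle'', but your proposed resolution is incorrect.

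You assert that the eigenvalues of $DF_\infty(\mathbf{0})$ on the standard representation ``lie strictly inside the unit disk after squaring''. They do not. A direct computation from \eqref{eq:def F infty coordinate} gives $DF_\infty(\mathbf{0})=-\mathrm{Id}$, so $D(F_\infty^{\circ 2})(\mathbf{0})=\mathrm{Id}$: the two-step map is \emph{tangent to the identity} at the fixed point. Consequently there is no uniform $\lambda<1$ with $F_\infty^{\circ 2}(P_c)\subseteq P_{\lambda c}$; in fact the paper shows $c-\phi(c)\asymp c^3$ as $c\to 0$, so $\phi(c)/c\to 1$. This also kills your perturbation step: uniform $C^1$-convergence of $F_d$ to $F_\infty$ gives an error in $F_d^{\circ 2}(P_c)$ of order $\varepsilon(d)\cdot c$ with $\varepsilon(d)\to 0$, but the gain you need to beat is only $O(c^3)$, so no choice of $d_2$ makes the error negligible uniformly down to $c=0$.

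The paper therefore splits the range of $c$. For $c\in[c_0,q+1]$ your argument is essentially what they do: $F_\infty^{\circ 2}(P_c)\subseteq P_{\phi(c)}$ with $\phi(c)<c$, compactness, uniform convergence. For $c\in(0,c_0]$ they work to \emph{third} order: the Taylor expansion of $\Phi_y=F_{1/y}^{\circ 2}$ at $\mathbf{0}$ is $\mathrm{Id}+T_{3,y}+O(\|\cdot\|^4)$ (the quadratic term vanishes because $DF_{1/y}(\mathbf{0})=-\mathrm{Id}$), the cubic tensors $T_{3,y}$ converge to $T_{3,0}$ as $y\to 0$, and the limit cubic produces the strict gain $c-\phi(c)\ge Ac^3$. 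One then chooses $c_0$ so that the $O(c^4)$ remainder is $\le \tfrac{A}{4}c^3$ and $d_0$ so that $\|T_{3,y}-T_{3,0}\|\le \tfrac{A}{4}$, yielding $\Phi_y(D_c)\subseteq D_{\phi(c)+Ac^3/2}\subsetneq D_c$. This third-order bookkeeping is the missing idea in your proposal.
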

An intuitive explanation for why we need $F_d^{\circ 2}$ and cannot work with $F_d$ directly is that the derivative of $F_{d}$ at $\bf{0}$ is equal to $-\text{Id}$, which reflects the fact that we are dealing with an anti-ferromagnetic model, while the derivative of $F^{\circ 2}_{d}$ at $\bf{0}$ is equal to $\text{Id}$.

We postpone the proofs of the two results above to the subsequent sections. A crucial ingredient in both proofs will be to analyze the limit $\lim_{d\to \infty} F_d$.
We first utilize the two propositions to give a proof of Theorem~\ref{thm:main}.

\subsection{A proof of Theorem~\ref{thm:main}}\label{ssec:proof}
Fix an integer $q\geq 3$. Let $d_1,d_2$ be the constants from Proposition~\ref{prop:convex} and~\ref{prop:2stepforward} respectively. 
Let $d_0\geq \max\{d_1,d_2\}$ large enough to be determined below. 
Note that the log-ratios at depth $0$ are of the form $\infty \cdot \mathbf{e}_i$ and $-\infty\cdot\mathbf{1}$, where $\mathbf{1}$ denotes the all ones vector. 
This comes from the fact that the probabilities at level $0$ are either $1$ or $0$ and so the ratios are of the form $\mathbf{1}+\infty\mathbf{e}_i$ or $\mathbf{0}$. 
This implies that the log-ratios at depth $1$ are convex combinations of $F_d(\infty\cdot  \mathbf{e}_i)=d\log(1+\tfrac{-q}{d+1})\mathbf{e}_i$ and $F_d(-\infty\cdot  \mathbf{1})=d\log(1+\tfrac{q}{d+1-q})\mathbf{1}.$
So for $d\geq d_0$ and $d_0$ large enough they are certainly contained in $P_{q+1}$.

We start with the proof of~\eqref{equation:uniqueness critical}. We construct a decreasing sequence $\{c_n\}_{n\in \mathbb N}$ and let $\mathcal{T}_{2n-1}=P_{c_n}$.
For even $n>0$  we set $\mathcal{T}_n=F_d(P_{c_{n-1}})$, which is convex by Proposition~\ref{prop:convex}.
We set $c_1=q+1$ and for $n\geq 1$, given $c_n$, we can choose, by Proposition~\ref{prop:2stepforward}, $c_{n+1}<c_n$ so that $F_d^{\circ 2}(P_{c_n})\subseteq P_{c_{n+1}}$. Choose such a $c_{n+1}$ as small as possible.
We claim that the sequence $\{c_n\}_{n\in \mathbb{N}}$ converges to $0$. 
Suppose not then it must have a limit $c>0$. 
Choose $c'<c$ such that $F^{\circ 2}_d(P_c)\subseteq P_{c'}.$
Then for $n$ large enough we must have $F^{\circ 2}_{d}(P_{c_n})\subseteq P_{c/2+c'/2}$, contradicting the choice of $c_{n+1}$.

Since $\{c_n\}_{n\in \mathbb{N}}$ converges to $0$, it follows that the sequence $\mathcal{T}_n$ converges to $\{0\}$.
With Lemma~\ref{lem: Tab sequence} this implies \eqref{equation:uniqueness critical}.

To prove the second part let $\avar\in (0,1)$. Consider the decreasing sequence $\{\cvar_n\}_{n\in \mathbb N}$ with $\cvar_n=(q+1)\alpha^{n-1}$. Set $\mathcal{T}_{2n-1}=P_{\cvar_n}$ and $\mathcal{T}_{2n}=F_{d,\avar}(P_{\cvar_{n-1}})$.
We use the following observation.
\begin{lemma}\label{lem:increasingd and alpha}
For any $\avar \in (0,1]$, any $\mathbf{x} \in \mathbb{R}^{q-1}$ and any integer $d$ there is $d' \geq d$ such that $F_{d,\avar}(\mathbf{x}) = \frac{d}{d'} \cdot F_{d'}(\mathbf{x})$. Moreover, $\frac{d}{d'}\leq \alpha$.
\end{lemma}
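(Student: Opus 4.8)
\textbf{Proof proposal for Lemma~\ref{lem:increasingd and alpha}.}

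The plan is to exploit the explicit formulas \eqref{eq:define Gd coordinate} and \eqref{eq:define Fd coordinate} to see exactly how the parameter $\avar$ enters, and to observe that it always appears bundled together with $1/(d+1)$ in the single combination $\tfrac{\avar q}{d+1}$. First I would set $\beta := \tfrac{\avar q}{d+1}$ and rewrite $G_{d,\avar;i}(\mathbf{x}) = (1-x_i)/\big(\sum_{j=1}^{q-1} x_j + 1 - \beta\big)$, so that $G_{d,\avar}$ and hence $F_{d,\avar}$ depend on the pair $(d,\avar)$ only through $\beta$ and the leading multiplicative factor $d$. Concretely, $F_{d,\avar;i}(\mathbf{x}) = d\log\!\big(1+\beta\, G_{d,\avar;i}(\exp(\mathbf{x}))\big)$, and the quantity inside the logarithm is a function of $\beta$ alone (once $\mathbf{x}$ is fixed). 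The strategy is therefore to choose $d'$ so that $\tfrac{q}{d'+1} = \beta$, i.e. the $\avar = 1$ map $F_{d'} = F_{d',1}$ has the same $\beta$ as $F_{d,\avar}$, and then the only discrepancy between $F_{d,\avar}(\mathbf{x})$ and $F_{d'}(\mathbf{x})$ is the outer scalar factor $d$ versus $d'$.

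Carrying this out: define $d' := \tfrac{q}{\beta} - 1 = \tfrac{d+1}{\avar} - 1$. Then $\tfrac{q}{d'+1} = \tfrac{\avar q}{d+1} = \beta$, so $G_{d',1;i}(\exp(\mathbf{x})) = (1 - \exp(x_i))/\big(\sum_j \exp(x_j) + 1 - \beta\big) = G_{d,\avar;i}(\exp(\mathbf{x}))$, and consequently
\[
F_{d'}(\mathbf{x}) = d' \log\!\Big(1 + \beta\, G_{d,\avar}(\exp(\mathbf{x}))\Big) = \frac{d'}{d}\, F_{d,\avar}(\mathbf{x}),
\]
which rearranges to $F_{d,\avar}(\mathbf{x}) = \tfrac{d}{d'} F_{d'}(\mathbf{x})$, as claimed. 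It remains to check the two assertions about $d'$. Since $\avar \in (0,1]$ we have $\tfrac{d+1}{\avar} \geq d+1$, hence $d' = \tfrac{d+1}{\avar} - 1 \geq d$; and if $d$ is an integer this also gives $d' \geq d \geq 1 > 1$, so $d'$ lies in the allowed range $\mathbb{R}_{>1}$ for the first subscript of $F$. Finally $\tfrac{d}{d'} = \tfrac{d}{(d+1)/\avar - 1} = \tfrac{d\avar}{d+1-\avar} \leq \tfrac{d\avar}{d} = \avar$ (using $1 - \avar \geq 0$ in the denominator), giving $\tfrac{d}{d'} \leq \avar$.

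There is essentially no obstacle here: the lemma is a bookkeeping identity, and the only thing to be slightly careful about is that $d'$ need not be an integer — but the definition of $F_d$ in \eqref{eq:define Fd coordinate} and Lemma~\ref{lem:treerecursion}'s setup only require $d \in \mathbb{R}_{>1}$, so using a non-integer $d'$ is legitimate. One should also note that the intended application is to the convex-combination picture: writing $F_{d,\avar}(\mathbf{x}) = \tfrac{d}{d'}F_{d'}(\mathbf{x})$ with $\tfrac{d}{d'} \leq \avar$ means $F_{d,\avar}(\mathbf{x})$ is a scaled-down (toward $\mathbf{0}$) copy of a point in the image of $F_{d'}$, which is exactly what is needed to transfer the geometric shrinking estimates from the $\avar=1$ case to the subcritical case and ultimately to obtain the $C\alpha^{n/2}$ bound in \eqref{equation:uniqueness subcritical}.
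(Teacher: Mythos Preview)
Your proof is correct and follows essentially the same approach as the paper: both observe that $\tfrac{1}{d}F_{d,\avar}$ depends on $(d,\avar)$ only through the combination $\tfrac{\avar}{d+1}$, define $d'$ by $\tfrac{\avar}{d+1}=\tfrac{1}{d'+1}$ (your $d'=\tfrac{d+1}{\avar}-1$), and then read off $\tfrac{d}{d'}=\tfrac{\avar d}{d+1-\avar}\le\avar$. Your write-up is just a more explicit version of the paper's two-line argument.
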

\begin{proof}
When viewing $\avar$ and $d$ as variables, $\tfrac{1}{d}F_{d,\avar;i}$ only depends on the ratio $\tfrac{\avar}{d+1}$. 
Therefore the first statement of the lemma holds with $d'$ defined by $\tfrac{\avar}{d+1}=\tfrac{1}{d'+1}.$
Since $\tfrac{d}{d'}=\tfrac{\avar d}{d+1-\avar}$, the second statement also holds.
\end{proof}

The lemma above implies that $F_{d,\alpha}(P_{\cvar_n})=\tfrac{d}{d'}\cdot F_{d'}(P_{\cvar_n})$ and hence is convex for each $c_n.$
It moreover implies that 
\[F^{\circ 2}_{d,\alpha}(P_{\cvar_n})\subset \alpha F_{d'}(\alpha F_{d'}( P_{\cvar_n})))\subset\alpha P_{\cvar_{n}}=P_{\cvar_{n+1}}.\] 
By basic properties of the logarithm, \eqref{equation:uniqueness subcritical} now quickly follows.
This finishes the proof of Theorem~\ref{thm:main}.

The strengthening mentioned in Remark~\ref{rem:strengthen} can be derived from the fact that the derivative of $F_{d,\alpha}$ at $\mathbf{0}$ is equal to $\frac{-\alpha d}{d+1-\alpha}\text{Id}$.
Note that $\frac{\alpha d}{d+1-\alpha}<\alpha$ for all $\alpha\in (0,1)$ and $d$. 
Therefore on a small enough open ball $B$ around $\mathbf{0}$ the operator norm of the derivative of $F_{d,\alpha}$ can be bounded by $\hat{\alpha}$ for all $d\geq d_0$ (and by $\alpha$ for fixed $d\geq d_0$). 
Then for any integer $n\geq 0$, $F^{\circ n}_{d,\alpha}(B)\subset \hat{\alpha}^n B$ ($\alpha^n B$ respectively).
For $n_0$ large enough $P_{\cvar_{n_0}}$ is contained in this ball $B$. 
For $n>2n_0$ we then set $\mathcal{T}_n=\hat{\alpha}^{n-2n_0} B$ ($\alpha^{n-2n_0} B$ respectively). 
The statements in the remark now follow quickly.

\subsection{The \texorpdfstring{$d\rightarrow \infty$ limit map}{The d → ∞ limit map}}
As mentioned above, an important tool in our approach is to analyze the maps $F_d$ as $d\to \infty.$ 
    Since $F_d(\mathbb{R}^{q-1})$ is bounded, it follows that as $d\to \infty$, $F_{d}(x_1,\ldots,x_{q-1})$ converges uniformly to the limit map 
\begin{equation}\label{eq:define F infty}
    F_{\infty}(x_1,\ldots,x_{q-1}),
\end{equation}
with coordinate functions
\begin{equation}\label{eq:def F infty coordinate}
    F_{\infty;i}(x_1,\ldots,x_{q-1}):=q\frac{1-e^{x_i}}{\sum_{j=1}^{q-1}e^{x_i}+1}.
\end{equation}
We write  $G_{\infty;i}(x_1,\ldots,x_{q-1}) = q\frac{1-x_i}{\sum_{j=1}^{q-1}x_j+1}$ for the $i$th coordinate function of the fractional linear map $G_{\infty}$. Note that $F_{\infty} = G_{\infty} \circ \exp$.

By Lemma \ref{lem:symmetry} for any $\pi \in S_q$, any $\mathbf{x} \in \mathbb{R}^{q-1}$ and any $d$ we have $\pi \cdot F_{d}(\mathbf{x}) =  F_{d}(\pi \cdot \mathbf{x})$. As the action of  $\pi$ on $\mathbb{R}^{q-1}$ does not depend on $d$, we immediately see $\pi \cdot F_{\infty}(\mathbf{x}) =  F_{\infty}(\pi \cdot \mathbf{x})$ follows.

In the next two sections we will prove Propositions~\ref{prop:convex} and~\ref{prop:2stepforward}. 
The idea is to first prove a variant of these propositions for the map $F_\infty$ and then use that $F_d\to F_\infty$ uniformly to finally prove the actual statements.
We use the description of $P_\cvar$ as intersection of half spaces $\pi\cdot H_{\geq -\cvar}$ in Section~\ref{sec:convex} and the description as the union of the $\pi \cdot D_\cvar$ in Section~\ref{sec:forward}.

\section{Convexity of the forward image of \texorpdfstring{$P_c$}{Convexity of the forward image of Pc}}\label{sec:convex}

This section is dedicated to proving Proposition \ref{prop:convex}.

Fix an integer $q\geq 3$.
For $\mu\in \mathbb{R}$ we define the half space $H_{\geq \mu}$ as in~\eqref{eq:half space}.
The half space $H_{\leq \mu}$ is defined similarly. We denote by $H_{\mu}$ the affine space which is the boundary of $H_{\leq \mu}$.

In what follows we will often use that the map $G_\infty$ is a fractional linear transformation and thus preserves lines and hence maps convex sets to convex sets, see e.g.~\cite[Section 2.3]{boyd2004convex}.
\begin{lemma}\label{lem:forward convex infinity}
For all $\cvar>0$, the set $\exp(H_{\ge -\cvar}):=\{\exp(\mathbf{x})\mid \mathbf{x}\in H_{\geq -c}\}$ is strictly convex, consequently
\[
    G_\infty(\exp(H_{\ge -\cvar}))
\]
is strictly convex.
\end{lemma}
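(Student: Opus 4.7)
The plan is to reduce the first claim to strict convexity of an explicit defining function, and the second to the preservation of line segments by fractional linear maps.

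First, I would rewrite
\[
\exp(H_{\geq -c}) = \Bigl\{\mathbf{y}\in \mathbb{R}_{>0}^{q-1} : -\sum_{i=1}^{q-1} \log y_i \leq c\Bigr\}.
\]
The function $g(\mathbf{y}) := -\sum_{i=1}^{q-1}\log y_i$ is strictly convex on $\mathbb{R}_{>0}^{q-1}$, since each summand $-\log y_i$ is strictly convex and strict convexity is preserved under finite sums. Its sublevel set $\{g\leq c\}$ is therefore convex. For strict convexity, I would observe that a nondegenerate line segment contained in the boundary $\{g=c\}$ would force $g$ to be affine, hence not strictly convex, along that segment, contradicting strict convexity of $g$. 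Thus $\exp(H_{\geq -c})$ is strictly convex, yielding the first assertion.

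For the second statement, I would exploit that $G_\infty$ is a fractional linear transformation whose coordinate functions all share the common denominator $1+\sum_j y_j$, which is strictly positive on $\mathbb{R}_{>0}^{q-1}$. As cited in the excerpt, such maps send convex sets in their domain of regularity to convex sets, so $G_\infty(\exp(H_{\geq -c}))$ is certainly convex. For strictness I would argue by contraposition: if some nondegenerate segment $[\mathbf{a},\mathbf{b}]$ lay in $\partial G_\infty(\exp(H_{\geq -c}))$, pulling back under $G_\infty$ (which is a diffeomorphism on $\mathbb{R}_{>0}^{q-1}$ because the denominator never vanishes there) would produce a subarc of a projective line inside $\partial \exp(H_{\geq -c})$; since the denominator has constant sign on the domain, this subarc is an honest Euclidean line segment, contradicting the first paragraph.

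The main obstacle, as I see it, is making the second step rigorous, i.e.\ writing down a clean elementary justification that fractional linear maps with everywhere-positive denominator preserve strict convexity. The cleanest implementation is the pullback argument just sketched, supported if desired by a direct parametric computation: restricting $G_\infty$ to an affine line $\mathbf{y}(t)=\mathbf{y}_0+t\mathbf{v}$ one finds each coordinate takes the form $(a_i+b_it)/(c+dt)$ with the same $c+dt>0$ on the relevant interval, and a short calculation shows that $G_\infty(\mathbf{y}(t))-G_\infty(\mathbf{y}(t_1))$ has direction independent of $t$, so the image really is a line segment.
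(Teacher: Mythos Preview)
Your proposal is correct and follows essentially the same approach as the paper: strict concavity of $\log$ (equivalently, strict convexity of $g(\mathbf{y})=-\sum_i \log y_i$) for the first claim, and the line-preserving property of fractional linear maps for the second. The paper is terser than you on why \emph{strict} convexity survives under $G_\infty$, simply citing that fractional linear maps preserve lines and hence convex sets; your explicit pullback argument fills in what the paper leaves implicit.
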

\begin{proof}
Since $G_\infty$ is a fractional linear transformation, it preserves convex sets. 
It therefore suffices to show that $\exp(H_{\geq -\cvar})$ is strictly convex. 

To this end take any $\mathbf{x},\mathbf{y}\in \exp(H_{\geq -\cvar})$ and let $\lambda\in (0,1)$.
We need to show that $\lambda \mathbf{x} +(1-\lambda)\mathbf{y}\in \exp(H_{\geq -\cvar})$.
By strict concavity of the logarithm we have
\[
\sum_{i=1}^{q-1}\log(\lambda x_i+(1-\lambda)y_i)\geq \sum_{i=1}^{q-1} \lambda \log(x_i)+(1-\lambda)\log(y_i)>-\cvar,
\]
we conclude that $\exp(H_{\geq -\cvar})$ is strictly convex.
\end{proof}

In what follows we need the \emph{angle} between the tangent space of $G_\infty(\exp(H_{-c}))$ for $c>0$ at $G_\infty(\mathbf{x})$ for any $\mathbf{x}\in \exp(H_{-\cvar})$ and the space $H_0$. This angle is defined as the angle of a normal vector of the tangent space pointing towards the interior of $G_\infty(\exp(H_{\geq -c}))$ and the vector $-\mathbf{1}$ (which is a normal vector of $H_0$).

\begin{lemma}\label{lem: forward infinite try 2}
For any $\cvar\in[0,q+1]$ and any $\mathbf{x}\in \exp(H_{-\cvar})$ the angle between the tangent space of $G_\infty(\exp(H_{-c}))$ at $G_\infty(\mathbf{x})$ and $H_0$ is strictly less than $\pi/2$.
\end{lemma}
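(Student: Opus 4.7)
The plan is to identify the inward normal of the convex body $G_\infty(\exp(H_{\geq -c}))$ at its boundary point $G_\infty(\mathbf{x})$ and verify that its inner product with $-\mathbf{1}$ is strictly positive; this is exactly what the specified angle condition requires. Lemma~\ref{lem:forward convex infinity} guarantees that the body is (strictly) convex, so the inward normal is well-defined up to a positive scalar.

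The key input is the transformation rule for normals of level sets under a diffeomorphism. Observing that $\exp(H_{\geq -c})$ is the superlevel set of $\mathbf{y}\mapsto \sum_i \log y_i$, its inward normal at $\mathbf{x}$ is proportional to the gradient $(1/x_1,\dots,1/x_{q-1})$. Since $G_\infty$ is a real-analytic diffeomorphism on the positive orthant (the denominator $\sum_j x_j+1$ is strictly positive there), the inward normal of the image body at $G_\infty(\mathbf{x})$ is proportional to
\[
n \;=\; (DG_\infty|_{\mathbf{x}})^{-T}\bigl(1/x_1,\dots,1/x_{q-1}\bigr),
\]
and the task reduces to showing $\langle n,-\mathbf{1}\rangle>0$.

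To compute $n$, I would differentiate the coordinate formula of $G_\infty$ directly. Setting $S=1+\sum_j x_j$ and $\mathbf{w}=\mathbf{1}-\mathbf{x}$, one finds $DG_\infty|_{\mathbf{x}}=-(q/S^2)(S\cdot I+\mathbf{w}\mathbf{1}^T)$, a rank-one update of a scalar multiple of the identity. The identity $\mathbf{1}^T\mathbf{w}=q-S$ makes the Sherman--Morrison inversion fully explicit, and after transposing and applying to $(1/x_i)_i$, a short simplification reduces $\langle n,-\mathbf{1}\rangle$ to a positive multiple of $\sum_i 1/x_i + (q-1)^2$, which is manifestly positive.

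The only real obstacle is bookkeeping: tracking which direction is ``inward'' across the nonlinear change of variables and then carrying through the matrix inverse. The Sherman--Morrison structure makes the computation essentially automatic, and positivity of the final expression is immediate from $S>0$ and $x_i>0$. I also note that the hypothesis $c\le q+1$ plays no role in this argument: the conclusion holds for all $c\ge 0$, though only the stated range will be used in the sequel.
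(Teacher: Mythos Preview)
Your proposal is correct. Both you and the paper compute a normal to the image hypersurface $G_\infty(\exp(H_{-c}))$ and check its inner product with $-\mathbf{1}$; the difference is in the mechanics. The paper writes out $G_\infty^{-1}$ explicitly, realizes the image as a level set of $g(\mathbf{y})=\prod_i G_{\infty,i}^{-1}(\mathbf{y})$, and shows $\langle\nabla g,\mathbf{1}\rangle\neq 0$ by observing that all summands carry the same sign; it then needs a separate continuity-plus-convexity argument (invoking Lemma~\ref{lem:forward convex infinity}) to upgrade ``not equal to $\pi/2$'' to ``strictly less than $\pi/2$''. Your route instead pushes the inward normal $(1/x_1,\dots,1/x_{q-1})$ forward by $(DG_\infty)^{-T}$ and inverts the rank-one perturbation via Sherman--Morrison; this automatically tracks orientation and yields the explicit closed form $\langle n,-\mathbf{1}\rangle=\tfrac{S}{q^2}\bigl(\sum_i 1/x_i+(q-1)^2\bigr)>0$, so the strict inequality drops out directly with no continuity step. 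The two computations are essentially dual, but yours is a little more streamlined and, as you note, makes it transparent that the hypothesis $c\le q+1$ is not used here.
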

\begin{proof}
We will first show that the tangent space cannot be orthogonal to $H_0$.

The map $G_\infty$ is invertible (when restricted to $\mathbb{R}_{>0}^{q-1}$) with inverse $G^{-1}_\infty$ whose coordinate functions are given by
\[
G^{-1}_{\infty,i}(y_1,\ldots,y_{q-1})= \frac{-qy_i}{\sum_{i=1}^{q-1}y_i+q}+1.
\]

Define $g:\mathbb{R}^{q-1}\setminus H_{-q}\to \mathbb{R}$ by $g(\mathbf{y})=\prod_{i=1}^{q-1}G^{-1}_{\infty,i}(\mathbf{y})$.
Then the image of $\exp(H_{-c})$ under $G_\infty$ is contained in the hypersurface $\{\mathbf{y}\in \mathbb{R}^{q-1}\mid g(\mathbf{y})=\exp(-c)\}.$
Therefore a normal vector of the tangent space of $G_\infty(\exp(H_{-c}))$ at $\mathbf{y}=G_\infty(\mathbf{x})$ is given by the gradient of the function $g$.
Thus to show that this tangent space is not orthogonal to $H_0$, we need to show that 
\begin{equation}\label{eq:not orthogonal}
\sum_{i=1}^{q-1} \tfrac{\partial}{\partial y_i}g(\mathbf{y})\neq 0.
\end{equation}
We have 
\begin{align*}
\sum_{i=1}^{q-1} \tfrac{\partial}{\partial y_i}g(\mathbf{y})&=\sum_{i=1}^{q-1}\sum_{j=1}^{q-1} \frac{\prod_{k=1}^{q-1}G^{-1}_{\infty,k}(\mathbf{y})}{G^{-1}_{\infty,j}(\mathbf{y})}\tfrac{\partial}{\partial y_i}G^{-1}_{\infty,j}(\mathbf{y})
\\
&=\sum_{j=1}^{q-1}\frac{\prod_{k=1}^{q-1}G^{-1}_{\infty,k}(\mathbf{y})}{G^{-1}_{\infty,j}(\mathbf{y})}\sum_{i=1}^{q-1}\tfrac{\partial}{\partial y_i}G^{-1}_{\infty,j}(\mathbf{y})
\\
&=\sum_{j=1}^{q-1}\frac{\prod_{k=1}^{q-1}G^{-1}_{\infty,k}(\mathbf{y})}{G^{-1}_{\infty,j}(\mathbf{y})} \cdot \frac{-q(\sum_{i=1}^{q-1}y_i+q)+q(q-1)y_j}{(\sum_{i=1}^{q-1}y_i+q)^2}
\\
&=\sum_{j=1}^{q-1}\frac{\prod_{k=1}^{q-1}G^{-1}_{\infty,k}(\mathbf{y})}{G^{-1}_{\infty,j}(\mathbf{y})}\cdot\frac{-(q-1)G_{\infty,j}^{-1}(\mathbf{y})-1}{\sum_{i=1}^{q-1}y_i+q}.
\end{align*}

Since $G^{-1}_{\infty,k}(\mathbf{y})>0$ for each $k$, all terms in the final sum are nonzero and have the same sign. This proves~\eqref{eq:not orthogonal}.

Since the angle between the tangent space of $G_\infty(\exp(H_{-c}))$ at $G_\infty(\mathbf{x})$ and $H_0$ depends continuously on $\mathbf{x}$ this angle should either be always less than $\pi/2$ or always be bigger.
Since by the previous lemma the set $G_\infty(\exp(H_{\geq -c}))$ is convex, it is the former.
\end{proof}

We next continue with the finite case.
We will need the following definition.
The \emph{hypograph} of a function $f:D\to \mathbb{R}$ is the region $\{(x,y)\mid x\in D, y\leq f(x)\}$.
Below we will consider a hypersurface contained in $\mathbb{R}^{q-1}$ that we view as the graph of a function with domain contained in $H_0$. 
In this context the hypograph of such a function is again contained in $\mathbb{R}^{q-1}$, but the `positive $y$-axis' points in the direction of $\mathbf{1}$ as seen from $\mathbf{0} \in H_0$.

\begin{figure}
    \centering
\begin{tikzpicture}
\centering
   \filldraw[lightBlue](0.0,1.20298)--(0.0854683,1.22135)--(0.169623,1.23746)--(0.252203,1.25111)--(0.332949,1.26209)--(0.411607,1.27023)--(0.487928,1.27532)--(0.561674,1.27719)--(0.632616,1.27568)--(0.700539,1.27064)--(0.765244,1.26192)--(0.826546,1.2494)--(0.88428,1.23299)--(0.938302,1.21258)--(0.988486,1.18813)--(1.03473,1.15958)--(1.07696,1.12691)--(1.1151,1.09013)--(1.14914,1.04926)--(1.17907,1.00434)--(1.20488,0.955462)--(1.22663,0.902707)--(1.24437,0.846195)--(1.25817,0.786064)--(1.26815,0.722475)--(1.2744,0.655603)--(1.27707,0.585644)--(1.27631,0.512808)--(1.27227,0.437319)--(1.26513,0.359413)--(1.25508,0.279335)--(1.2423,0.197338)--(1.22699,0.113679)--(1.20935,0.0286192)--(-1.21547,-1.15836)--(-1.23236,-1.09062)--(-1.24685,-1.02198)--(-1.25874,-0.952488)--(-1.26784,-0.882209)--(-1.27397,-0.811205)--(-1.27693,-0.73954)--(-1.27657,-0.667276)--(-1.27272,-0.594473)--(-1.26524,-0.521195)--(-1.254,-0.447503)--(-1.2389,-0.373456)--(-1.21983,-0.299116)--(-1.19673,-0.224541)--(-1.16955,-0.149791)--(-1.13826,-0.0749248)--(-1.10284,0.0)--(-1.06333,0.0749248)--(-1.01976,0.149791)--(-0.972191,0.224541)--(-0.920715,0.299116)--(-0.865441,0.373456)--(-0.8065,0.447503)--(-0.744045,0.521195)--(-0.678246,0.594473)--(-0.609294,0.667276)--(-0.537392,0.73954)--(-0.462762,0.811205)--(-0.385635,0.882209)--(-0.306253,0.952488)--(-0.224868,1.02198)--(-0.141734,1.09062)--(-0.0571119,1.15836)--(1.18073,-0.0286192)--(1.11331,-0.113679)--(1.04496,-0.197338)--(0.975742,-0.279335)--(0.905718,-0.359413)--(0.83495,-0.437319)--(0.763498,-0.512808)--(0.691427,-0.585644)--(0.618797,-0.655603)--(0.545671,-0.722475)--(0.472109,-0.786064)--(0.398174,-0.846195)--(0.323925,-0.902707)--(0.249422,-0.955462)--(0.174724,-1.00434)--(0.0998897,-1.04926)--(0.0249778,-1.09013)--(-0.0499535,-1.12691)--(-0.124846,-1.15958)--(-0.199641,-1.18813)--(-0.274281,-1.21258)--(-0.348706,-1.23299)--(-0.422857,-1.2494)--(-0.496674,-1.26192)--(-0.570097,-1.27064)--(-0.643065,-1.27568)--(-0.715516,-1.27719)--(-0.787388,-1.27532)--(-0.858618,-1.27023)--(-0.929146,-1.26209)--(-0.998907,-1.25111)--(-1.06784,-1.23746)--(-1.13589,-1.22135)--(-1.20298,-1.20298)--(0.0,1.20298);
   \draw[-] (-3.8, 0) -- (2, 0) node[below] {$x_1$};
  \draw[-] (0, -2.5) -- (0, 2) node[left] {$x_2$};
  \draw[scale=1, domain=0:1, smooth, variable=\x, gR] plot ({20*ln(1 + (1 - exp(2*(1 - \x) - 4*\x))/(
   7*(6/7 + exp(2*(1 - \x) - 4*\x) + exp(-4 *(1 - \x) + 2*\x))))},{20*ln(1 + (1 - exp(-4*(1 - \x)+2*\x))/(
   7*(6/7 + exp(2*(1 - \x) - 4*\x) + exp(-4 *(1 - \x) + 2*\x))))});
  \draw[-,dY] (-3, 3) -- (3,-3) node[left, dY] {$H_0$};
  \draw[-,gG] (1.7,-1.7) -- (-1.7, 1.7) node[right, gG] {$\text{Dom}_\cvar$};
 \node [gB] at (-0.6,-0.5) {$F_\yvar(P_\cvar)$};
  \node [gR] at (1.8,1.8) {$F_\yvar(H_{-\cvar})$};
 
\end{tikzpicture}
    \caption{Depicting the situation in Lemma \ref{lem:forward finite}, for $q=3,\cvar = 2$ and $\yvar = \frac{1}{20}$. The domain $\text{Dom}_\cvar$ of the function $h_{\yvar,\cvar}$ which we define in the proof of Lemma \ref{lem:forward finite} is made by choosing $a' = -3$.}
    \label{fig:ConexityPicture}
\end{figure}
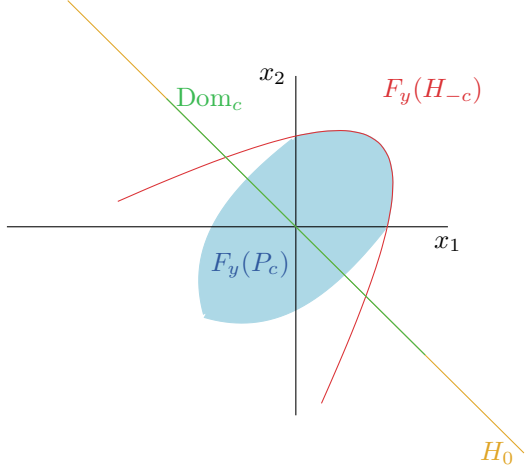

\begin{lemma}\label{lem:forward finite}
There exists $y_1>0$ such that for all $y\in[0,y_1)$ and $\cvar\in[0,q+1]$ the set $F_y(P_c)$ is contained in the hypograph of a concave function, $h_{\yvar,\cvar}$, with a convex compact domain in $H_0$.
\end{lemma}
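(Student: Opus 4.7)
The plan is to establish the lemma first in the limit $y=0$ (i.e.\ for the map $F_\infty$), using Lemmas~\ref{lem:forward convex infinity} and~\ref{lem: forward infinite try 2}, and then to propagate the conclusion to small $y>0$ by the uniform convergence $F_y\to F_\infty$. Throughout I decompose $\mathbb{R}^{q-1}=H_0\oplus \mathbb{R}\mathbf{1}$ and write $\pi_{H_0}$ for the projection along $\mathbf{1}$. A subset of $\mathbb{R}^{q-1}$ lies in the hypograph of a concave function with convex compact domain in $H_0$ precisely when it lies under the upper boundary of some compact convex subset of $\mathbb{R}^{q-1}$, so the task reduces to producing such an enclosing convex body.

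For $y=0$: by Lemma~\ref{lem:forward convex infinity}, $K_\cvar:=F_\infty(H_{\geq -\cvar})=G_\infty(\exp(H_{\geq -\cvar}))$ is convex, and by the explicit form of~\eqref{eq:def F infty coordinate} each coordinate of $F_\infty$ is bounded, so $K_\cvar$ is in fact compact convex. Set $\mathrm{Dom}_\cvar:=\pi_{H_0}(K_\cvar)$ and
\[
h_{0,\cvar}(\mathbf{y}):=\max\{t\in\mathbb{R}:\mathbf{y}+t\mathbf{1}\in K_\cvar\}.
\]
Standard convex analysis yields that $\mathrm{Dom}_\cvar$ is convex compact, that $h_{0,\cvar}$ is concave on it, and that $K_\cvar$ sits inside the hypograph of $h_{0,\cvar}$. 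Since $P_\cvar\subseteq H_{\geq -\cvar}$ we get $F_\infty(P_\cvar)\subseteq K_\cvar$, and the required inclusion follows. The role of Lemma~\ref{lem: forward infinite try 2} is to guarantee non-degeneracy: because the tangent to $\partial K_\cvar=F_\infty(H_{-\cvar})$ is nowhere orthogonal to $H_0$, the projection $\pi_{H_0}$ is a local diffeomorphism along the boundary, so $K_\cvar$ has non-empty interior and $\mathrm{Dom}_\cvar$ is a full-dimensional convex body in $H_0$.

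For $y>0$: let $L_\cvar^y$ denote the closed convex hull of $F_y(H_{\geq -\cvar})$. Since the coordinates of $F_y$ are uniformly bounded in $y$ (inspect~\eqref{eq:define Fd coordinate}), $L_\cvar^y$ is compact convex. Put $\mathrm{Dom}_\cvar^y:=\pi_{H_0}(L_\cvar^y)$ and let $h_{y,\cvar}$ be the upper boundary function of $L_\cvar^y$; this is concave on a convex compact domain in $H_0$, and $F_y(P_\cvar)\subseteq F_y(H_{\geq -\cvar})\subseteq L_\cvar^y$ lies in its hypograph. Uniform convergence $F_y\to F_\infty$ forces $L_\cvar^y\to K_\cvar$ in Hausdorff distance as $y\to 0$, so $\mathrm{Dom}_\cvar^y$ inherits a non-empty interior once $y$ is sufficiently small.

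The main obstacle I anticipate is securing a \emph{single} threshold $y_1>0$ that works uniformly for all $\cvar\in[0,q+1]$, as opposed to a $\cvar$-dependent threshold. This requires that the tangent-angle bound of Lemma~\ref{lem: forward infinite try 2} and the volume of $K_\cvar$ depend continuously on $\cvar$ and stay bounded away from the degenerate values ($\pi/2$ and $0$ respectively) throughout $[0,q+1]$; a compactness argument on this interval then produces a uniform $y_1$. Verifying this continuity and strictness is the only step requiring more than formal convex-analytic manipulation.
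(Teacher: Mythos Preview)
Your construction is correct for the lemma as literally stated, and is considerably simpler than the paper's argument: taking the closed convex hull $L_c^y$ of $F_y(H_{\geq -c})$ and reading off its upper boundary in the $\mathbf{1}$-direction immediately yields a concave function on a convex compact base whose hypograph contains $F_y(P_c)$. Your worry about a uniform threshold $y_1$ is in fact misplaced for your own framework --- the convex-hull construction works for every $y$ for which $F_y$ is defined and bounded, so no smallness of $y$ and no compactness argument over $c\in[0,q+1]$ is needed at all. (One small gap: $K_c=F_\infty(H_{\geq -c})$ is bounded and convex but not closed --- e.g.\ points like $-q\,\mathbf{e}_i$ are limit points not in the image --- so you should take its closure before extracting the upper boundary, just as you do for $L_c^y$.)

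The paper, however, takes a genuinely different and harder route, and for a reason. It does \emph{not} pass to the convex hull: instead it uses the implicit function theorem together with Lemma~\ref{lem: forward infinite try 2} to show that the hypersurface $F_{1/y}(H_{-c})$ itself is, over a suitable domain $\text{Dom}_c\subset H_0$, the graph of a function $h_{y,c}$; concavity is then obtained by a Hessian argument (strict negative definiteness at $y=0$ from Lemma~\ref{lem:forward convex infinity}, perturbed to small $y>0$), and the uniform $y_1$ comes from a compactness argument over $(c,\mathbf{x})\in[0,q+1]\times H_0$. The payoff is that the resulting $h_{y,c}$ has graph \emph{equal to} a piece of $F_{1/y}(H_{-c})$, not merely lying above $F_{1/y}(H_{\geq -c})$. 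This stronger property is precisely what is used downstream in the proof of Proposition~\ref{prop:convex}: the reverse inclusion in~\eqref{eq:intersection image} requires that every $\mathbf{z}\in\bigcap_{\pi}\pi\cdot\text{hypo}(h_{1/d,c})$ lying in $\mathbb{R}^{q-1}_{\geq 0}$ already has the form $F_d(\mathbf{x})$ for some $\mathbf{x}\in D_c$. That step would fail for your convex-hull $h_{y,c}$, since the hull generally contains points not in the image of $F_y$. So while your argument does prove the lemma as stated, it would not support Proposition~\ref{prop:convex} without substantial additional work.
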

\begin{proof}
We first prove that for any $\mathbf{x}\in H_{0}$ and $\cvar\in [0,q+1]$ there exists an open neighborhood $W_{\cvar,\mathbf{x}}=Y_{\cvar,\mathbf{x}}\times C_{\cvar,\mathbf{x}}\times X_{\cvar,\mathbf{x}}$ of $(0,\cvar,\mathbf{x})\in [0,1]\times[0,q+1] \times \mathbb{R}^{q-1}$ such that the following holds for any $(y',\cvar',\mathbf{x}')\in W_{\cvar,\mathbf{x}}$:
\begin{align}
   &\text{the angle between the tangent space of $F_{1/y'}(H_{-\cvar'})$ at $F_{1/y'}(\mathbf{x}'_{\cvar'})$ and $H_0$}\nonumber \\&\text{is strictly less than $\pi/2$,}\label{eq:90degrees}
\end{align}
where we denote $\mathbf{x}_\cvar:=\mathbf{x}-\frac{\cvar}{q-1}\mathbf{1}\in H_{-\cvar}$.
To see this note that by the previous lemma we have that the tangent space of  $F_{\infty}(H_{-\cvar})$ at $F_{\infty}(\mathbf{x}_\cvar)$ is not orthogonal to $H_0$ and in fact makes an angle of less than $\pi/2$ with $H_0$. 
Say it has angle $\pi/2-\gamma$. 
Since $(y,\cvar,\mathbf{x})\mapsto F_{1/y}(\mathbf{x}_\cvar)$ is analytic, there exists an open neighborhood $W_0$ of $(0,\cvar,\mathbf{x})$ such that for any $(y',\mathbf{x'},\cvar')\in W_0$ the angle between the tangent space of $F_{1/y'}(H_{-\cvar'})$ at $F_{1/y'}(\mathbf{x'}_{\cvar'})$ and $H_0$ is at most $\pi/2-\gamma/2$.
Clearly, $W_0$ contains an open neighborhood of $(0,\cvar,\mathbf{x})$ of the form $Y\times C\times X$ proving~\eqref{eq:90degrees}.

Next fix $\cvar\in [0,q+1]$ and $\mathbf{x}\in H_0$ and write $W_{\cvar,\mathbf{x}}=Y\times C\times X.$
Together with the implicit function theorem,~\eqref{eq:90degrees} now implies that for each $y'\in Y$ and any $\cvar'\in C$, that locally at $\mathbf{x}_{\cvar'}$, $F_{1/y'}( H_{-\cvar})$ is the graph of an analytic function $f_{y',\cvar',\mathbf{x}}$ on an open domain contained in $H_0$. 
Here we use that $F_{1/y}$ is invertible with analytic inverse.
By choosing $Y$ and $C$ small enough, we may by continuity assume that we have a common open domain, $D_{\cvar,\mathbf{x}}$, for these functions for all $\cvar'\in C$ and $y'\in Y$, where we may moreover assume that these functions are all defined on the closure of $D_{\cvar,\mathbf{x}}$.

We next claim, provided the neighbourhood $W=Y_{\cvar,\mathbf{x}}\times C_{\cvar,\mathbf{x}}$ is chosen small enough, that for each $y'\in Y$ and $\cvar'\in C$,
\begin{equation}\label{eq:hessian}
 \text{the largest eigenvalue of the Hessian $f_{y',\cvar',\mathbf{x}}$ on $D_{\cvar, \mathbf{x}}$ is strictly less than $0$.}
\end{equation}
To see this we note that by the previous lemma we know that $F_{\infty}(H_{\ge -\cvar})$ is strictly convex. 
Therefore the Hessian\footnote{Recall that the \emph{Hessian} of a function $f:U\to \mathbb{R}$ for an open set $U\subseteq\mathbb{R}^n$ at a point $u\in U$ is defined as the $n\times n$ matrix $H_f(u)$ with $(H_f(u))_{i,j}=\tfrac{\partial^2 f}{\partial x_i\partial x_j}(u)$. When these partial derivatives are continuous and the domain $U$ is convex, $f$ is concave if and only if its Hessian is negative definite at each point of the domain $U$~\cite{boyd2004convex}.} of $f_{0,\cvar,\mathbf{x}}$ on $D_{\cvar,\mathbf{x}}$ is negative definite, say its largest eigenvalue is $\delta<0$. 
Similarly as before, there exists an open neighborhood $W'\subseteq W$ of $(0,\cvar)$ of the form $W'=Y'\times C'$ such that for each $y'\in Y'$ and $\cvar'\in C'$, the function $f_{y',\cvar',\mathbf{x}}$ has a negative definite Hessian with largest eigenvalue at most $\delta/2<0$ for each $\mathbf{z}\in D_{\cvar,\mathbf{x}}$ (by compactness of the closure of 
$D_{\cvar,\mathbf{x}}$).
We now want to patch all these function to form a global function on a compact and convex domain.
We first collect some properties of $F_{1/y}$ that will allow us to define the domain.

First of all note that by compactness there exists $a>0$ such that for each $c\in [0,q+1]$, $\exp(P_c)\subset H_{\leq a}$ (where the inclusion is strict).
We now fix such a value of $a$.
Since $G_\infty$ is $S_q$-equivariant, we know that $G_\infty(H_{\leq a})=H_{\geq a'}$ for some $a'\in \mathbb{R}$.
We now choose $y^*>0$ small enough such that the following two inclusions hold for all $y\in [0,y^*]$ and $c\in [0,q+1]$
\begin{align}
    F_{1/y}(P_c)&\subset H_{\geq a'},\label{eq:image finite in Ha'}
    \\
    \text{proj}_{H_0}(F_\infty(H_{-c})\cap H_{\geq a'})&\subset  \text{proj}_{H_0}(F_{1/y}(H_{-c})), \label{eq:contained in image finite}
\end{align}
where $\text{proj}_{H_0}$ denotes the orthogonal projection onto the space $H_0$.
The first inclusion holds since $F_{1/y}$ converges uniformly to $F_\infty$ as $y\to 0.$
For the second inclusion note that  
\[
F_\infty(H_{-c})\cap H_{\geq a'}=G_\infty(\exp(H_{-c})\cap H_{\leq a})\subset F_{\infty}(H_{-c}).
\]
Because $\exp(H_{-c})\cap H_{\leq a}$ is compact, the desired conclusion follows since $F_{1/y}\to F_{\infty}$ uniformly as $y\to 0$.

Let us now consider for $\cvar\in [0,q+1]$ the projection 
\[
\text{Dom}_c:=\text{proj}_{H_0}(F_\infty(H_{-c})\cap H_{\geq a'}),
\]
see Figure \ref{fig:ConexityPicture}. Since $F_\infty(H_{-\cvar})\cap H_{\geq a'}$ is convex by Lemma~\ref{lem:forward convex infinity} and compact, it follows that $\text{Dom}_\cvar$ is compact and convex for each $\cvar\in [0,q+1]$.
Moreover, we claim that
\begin{equation}\label{eq:compact}
    \bigcup_{\cvar\in [0,q+1]} \left(\{\cvar\}\times \text{Dom}_\cvar\right) \subseteq [0,q+1]\times H_0 \text{ is compact.}
\end{equation}
Indeed, it is the continuous image of the compact set $\exp(H_{\geq -q-1})\cap H_{\leq a}$ under the map
\[
\exp(H_{\geq -q-1})\cap H_{\leq a}\to [0,q+1]\times H_0
\] 
defined by
\[
\mathbf{x}\mapsto \left(\sum_{i=1}^{q-1}x_i,\text{proj}_{H_0}(G_\infty(\mathbf{x}))\right).
\]

By \eqref{eq:contained in image finite} $\text{Dom}_c$ is contained in $\text{proj}_{H_0}(F_{1/y}(H_{-c}))$ for all $y\in [0,y^*]$ and $\cvar\in [0,q+1]$.
It follows that the sets 
$Y_{\cvar,\mathbf{x}}\times C_{\cvar,\mathbf{x}}\times D_{\cvar,\mathbf{x}}$, where $\mathbf{x}$ ranges over $H_{0}$ and $\cvar$ over $[0,q+1]$, form an open cover of $\{0\}\times \cup_{\cvar\in [0,q+1]}\left(\{\cvar\}\times \text{Dom}_c\right)$.
Since the latter set is compact by~\eqref{eq:compact}, we can take a finite sub cover. 
Therefore there exists $y_1>0$ such that for each $y\in [0,y_1)$ and each $\cvar\in[0,q+1]$ we obtain a unique global function $h_{y,\cvar}$ on the union of these finitely many domains, which by \eqref{eq:hessian} has a strictly negative definite Hessian.
By construction the union of these domains contains $\text{Dom}_\cvar$ for each $\cvar\in [0,q+1]$.
Consequently, restricted to $\text{Dom}_c$, $h_{y,\cvar}$ is a concave function for each $y\in [0,y_1)$ and $\cvar\in [0,q+1]$.
By~\eqref{eq:image finite in Ha'}, it follows that $F_{1/y}(P_c)$ is contained in the hypograph of $h_{y,\cvar}$, as desired.
\end{proof}

We can now finally prove Proposition~\ref{prop:convex}, which we restate here for convenience.
\begin{repprop}{prop:convex}
Let $q\geq 3$ be an integer. Then there exists $d_1>0$ such that for all $d\geq d_1$ and $c\in [0,q+1]$, $F_d(P_c)$ is convex.
\end{repprop}
\begin{proof}
By the previous lemma we conclude that for $d$ larger than $1/\yvar_1$, $F_d(P_\cvar)$ is contained in the hypograph of the function $h_{1/d,\cvar}$, denoted by $\text{hypo}(h_{\cvar,1/d})$ and moreover that this hypograph is convex, as the function $h_{1/d,\cvar}$ is concave on a convex domain.

Since $P_\cvar$ is invariant under the $S_q$-action, it follows that
\[
\exp(P_\cvar)=\bigcap_{\pi\in S_q} \pi \cdot (\exp(H_{\geq -\cvar })\cap H_{\leq a})
\]
and therefore by Lemma~\ref{lem:symmetry},
\begin{equation}\label{eq:intersection image}
F_d(P_c)=\bigcap_{\pi\in S_q}\pi \cdot (F_d(P_c))\subseteq \bigcap_{\pi\in S_q}\pi \cdot \text{hypo}(h_{1/d,\cvar}).
\end{equation}
We now claim that the final inclusion in~\eqref{eq:intersection image} is in fact an equality. 
To see the other inclusion, take some $\mathbf{z}\in \cap_{\pi\in S_q}\pi \cdot \text{hypo}(h_{1/d,\cvar})$.
By symmetry, we may assume that $\mathbf{z}$ is contained in $\mathbb{R}^{q-1}_{\geq 0}$.
Then $\mathbf{z}$ is equal to $F_d(\mathbf{x})$ for some $\mathbf{x}\in H_{\geq -c}\cap \mathbb{R}^{q-1}_{\leq 0}$, implying that $\mathbf{z}$ is indeed contained in $F_d(P_\cvar)$.

This then implies that $F_d(P_\cvar)$ is indeed convex being equal to the intersection of the convex sets $\pi \cdot \text{hypo}(h_{1/d,\cvar}).$
\end{proof}

\section{Forward invariance of \texorpdfstring{$P_c$ in two iterations}{Forward invariance of Pc in two iterations}}\label{sec:forward}

This section is dedicated to proving Proposition \ref{prop:2stepforward}.
We start with a version of the proposition for $d=\infty$ and after that consider finite $d$.

\subsection{Two iterations of \texorpdfstring{$F_\infty$}{Two iterations of F∞}}
Let $\Phi:\mathbb{R}^{q-1}\to \mathbb{R}^{q-1}$ be defined by
\[
    \Phi(x_1,\dots,x_{q-1})=F_\infty^{\circ 2}(x_1,\dots,x_{q-1})
\]
and its `restriction' to the half line $\mathbb{R}_{\leq/0}\cdot \mathbf{1}$, $\phi:\mathbb{R}_{\geq 0}\to\mathbb{R}_{\geq 0}$, by
\[
    \phi(t)=-\langle\Phi(-t/(q-1)\cdot \mathbf{1}),\mathbf{1}\rangle,
\]
where we use $\langle \cdot,\cdot\rangle$ to denote the standard inner product on $\mathbb{R}^{q-1}.$

This subsection is devoted to proving the following result.
\begin{prop}\label{prop:2step-forward infinity}
For any $\cvar\ge 0$ we have
\[
\Phi(P_{\cvar})\subseteq P_{\phi(\cvar)}\subsetneq P_\cvar.
\]
\end{prop}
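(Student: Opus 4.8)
The plan is to exploit the $S_q$-equivariance of $F_\infty$ (hence of $\Phi$) together with the half-space description $P_\cvar=\bigcap_{\pi\in S_q}\pi\cdot H_{\geq-\cvar}$. Since $\Phi$ commutes with the $S_q$-action, it suffices to show the single inclusion $\Phi(P_\cvar)\subseteq H_{\geq-\phi(\cvar)}$, i.e. that $\langle\Phi(\mathbf{x}),\mathbf{1}\rangle\geq-\phi(\cvar)$ for every $\mathbf{x}\in P_\cvar$; applying $\pi$ to both sides and intersecting over $S_q$ then yields $\Phi(P_\cvar)\subseteq\bigcap_\pi\pi\cdot H_{\geq-\phi(\cvar)}=P_{\phi(\cvar)}$. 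So the whole problem reduces to controlling the single linear functional $\mathbf{x}\mapsto\langle\Phi(\mathbf{x}),\mathbf{1}\rangle$ on the polytope $P_\cvar$, and to establishing the strict inequality $\phi(\cvar)<\cvar$ for $\cvar>0$.

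First I would analyze the inner map. Set $S(\mathbf{x})=\langle\mathbf{x},\mathbf{1}\rangle=\sum_i x_i$ and observe from \eqref{eq:def F infty coordinate} that $\langle F_\infty(\mathbf{x}),\mathbf{1}\rangle = q\,\frac{\sum_i(1-e^{x_i})}{\sum_j e^{x_j}+1} = q\left(1-\frac{(q-1)}{\sum_j e^{x_j}+1}\right)$ — wait, more precisely $\langle F_\infty(\mathbf x),\mathbf 1\rangle = q\cdot\frac{(q-1)-\sum_i e^{x_i}}{\sum_j e^{x_j}+1}$, which depends on $\mathbf{x}$ only through $\Sigma:=\sum_i e^{x_i}$. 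The constraint $\mathbf{x}\in H_{\geq-\cvar}$, i.e. $\sum x_i\geq-\cvar$, forces $\Sigma\geq(q-1)e^{-\cvar/(q-1)}$ by AM–GM, and this is the only lower bound on $\Sigma$ that matters; there is no useful upper bound, but $\langle F_\infty(\mathbf x),\mathbf 1\rangle$ is monotone decreasing in $\Sigma$, so on $H_{\geq-\cvar}$ it is at most the value attained at $\Sigma=(q-1)e^{-\cvar/(q-1)}$, which is exactly $\langle F_\infty(-\cvar/(q-1)\cdot\mathbf{1}),\mathbf 1\rangle$. Thus $F_\infty(H_{\geq-\cvar})\subseteq H_{\leq b(\cvar)}$ where $b(\cvar)=\langle F_\infty(-\cvar/(q-1)\cdot\mathbf 1),\mathbf 1\rangle$; intersecting over $S_q$ gives $F_\infty(P_\cvar)$ lying between the hyperplane $H_{b(\cvar)}$-type bounds for every coordinate-sum functional. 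Since $P_\cvar=\bigcup_\pi\pi\cdot D_\cvar$ and each generator $-\cvar\mathbf{e}_i$ and $\mathbf{0}$-type vertex maps into the relevant half-spaces, one gets that $F_\infty(P_\cvar)$ is contained in a symmetric polytope whose "radius" is governed by where the diagonal point goes.

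The second (and main) step is to iterate: apply the same reduction to $\Phi=F_\infty\circ F_\infty$. The key structural fact I would push is that among all $\mathbf{x}\in P_\cvar$ the quantity $-\langle\Phi(\mathbf{x}),\mathbf 1\rangle$ (equivalently the innermost half-space that $\Phi(\mathbf x)$ can violate) is extremized at the diagonal point $-\cvar/(q-1)\cdot\mathbf 1$, so that $\Phi(P_\cvar)\subseteq H_{\geq -\phi(\cvar)}$ with $\phi$ exactly the scalar function defined in the statement. This is where equivariance and the monotonicity-in-$\Sigma$ observation must be combined across two applications of $F_\infty$: after one step the image is a convex (by Lemma~\ref{lem:forward convex infinity}) $S_q$-symmetric set, and I must verify that the worst case for the coordinate-sum functional after the second application is again the diagonal, i.e. that the "extreme ray" structure is preserved. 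The cleanest route is probably to show directly that $F_\infty(P_\cvar)\subseteq P_{\psi(\cvar)}$ for the scalar $\psi(\cvar):=-\langle F_\infty(-\cvar/(q-1)\mathbf 1),\mathbf 1\rangle$ wait — one must be careful with signs since $F_\infty$ of a negative-sum vector has positive sum; the honest statement is that $F_\infty$ maps $P_\cvar$ into a polytope $P_{\cvar'}$ with $\cvar'$ determined by the image of a vertex, and then $\Phi(P_\cvar)=F_\infty(F_\infty(P_\cvar))\subseteq F_\infty(P_{\cvar'})\subseteq P_{\cvar''}$, and unwinding the two scalar maps gives exactly $\cvar''=\phi(\cvar)$. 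I expect the main obstacle to be precisely this verification that the vertex/diagonal is the extremal point for the relevant functional after \emph{two} iterations rather than one — a one-step bound of the form $F_\infty(P_\cvar)\subseteq P_{g(\cvar)}$ with $g(\cvar)>\cvar$ in general would not close the argument, so one genuinely needs the contraction to appear only after composing, and tracking which point of $P_\cvar$ achieves the extreme coordinate-sum for $\Phi$ is the delicate part. Finally, the strict inclusion $P_{\phi(\cvar)}\subsetneq P_\cvar$ for $\cvar>0$ reduces to the elementary inequality $\phi(\cvar)<\cvar$, which I would prove by a direct one-variable estimate: compute $\phi$ explicitly as a composition of two scalar fractional/exponential maps, check $\phi(0)=0$, $\phi'(0)=\big(\tfrac{d}{d'}\big)^2$-type factor $<1$ (consistent with Remark~\ref{rem:strengthen}'s derivative computation, here in the $d=\infty$ limit the relevant factor is $1$, so I'd instead need $\phi$ concave and below the diagonal, established by showing $\phi'(t)<1$ for all $t>0$ via monotonicity of the scalar maps), concluding $\phi(\cvar)<\cvar$ for all $\cvar>0$.
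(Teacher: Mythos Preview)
Your reduction via $S_q$-equivariance to the single inequality $\langle\Phi(\mathbf{x}),\mathbf{1}\rangle\geq-\phi(\cvar)$ on $P_\cvar$, together with the separate scalar inequality $\phi(\cvar)<\cvar$, is exactly the structure of the paper's argument. The observation that $\langle F_\infty(\mathbf{x}),\mathbf{1}\rangle$ depends only on $\Sigma=\sum_i e^{x_i}$ is also correct and useful for the first iteration.

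The genuine gap is the step you yourself flag as ``delicate'': showing that the minimum of $\langle\Phi(\cdot),\mathbf{1}\rangle$ on $P_\cvar$ (equivalently on the face $-\cvar\Delta$) is attained at the diagonal point $-\tfrac{\cvar}{q-1}\mathbf{1}$. Your proposed route through a one-step containment $F_\infty(P_\cvar)\subseteq P_{\cvar'}$ cannot close this. Concretely: after one application of $F_\infty$, you need an \emph{upper} bound on $\Sigma'=\sum_i e^{y_i}$ where $\mathbf{y}=F_\infty(\mathbf{x})$, but the only information the one-step bound retains is $\sum_i y_i\le b(\cvar)$, and convexity of $\exp$ gives no upper bound on $\Sigma'$ from this (concentrating mass makes $\Sigma'$ large). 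Equivalently, the smallest $\cvar'$ with $F_\infty(P_\cvar)\subseteq P_{\cvar'}$ is achieved at the vertex $\cvar\mathbf{1}$, and for small $\cvar$ one computes $\cvar'\sim(q-1)\cvar>\cvar$, so iterating the polytope containment diverges rather than contracts. Thus no amount of bookkeeping with $P_{\cvar'}$ after one step will recover $\phi(\cvar)$; one must track more than the coordinate sum through both iterations.

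The paper handles this by abandoning the containment-then-iterate idea entirely and instead analyzing $\langle\Phi(\mathbf{x}),\mathbf{1}\rangle$ directly as a function on the simplex $-\cvar\Delta$. The argument (Proposition~\ref{prop:infinite2iteration}) is a first-variation computation: at any non-diagonal point of the simplex one exhibits a tangent direction $\mathbf{w}\perp\mathbf{1}$ along which the function strictly decreases, so the minimum can only be at the barycenter. Verifying the sign of this directional derivative is the real work; it reduces to an inequality $v_\ell(\mathbf{y})>v_{\ell+1}(\mathbf{y})$ for certain explicit functions $v_i$, which in turn is broken into a ``balancing'' lemma (Lemma~\ref{lemma:balancing}, reducing to at most three distinct coordinate values) and a three-variable inequality (Lemma~\ref{lemma:3variable}) whose proof involves substantial algebraic manipulation, partly computer-verified. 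This is the missing idea in your proposal. For the strict inequality $\phi(\cvar)<\cvar$, your instinct that $\phi'(0)=1$ forces a more refined argument is correct; the paper proves it directly via the elementary bound $\log b>2(b-1)/(b+1)$ rather than through monotonicity of $\phi'$.
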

By the definition of $P_c$ in terms of $D_c$, \eqref{eq:fund domain}, and the $S_q$-equivariance of the map $F_\infty$ and hence of the map $\Phi$, it suffices to prove this for $P_c$ replaced by $D_c$.
This can be derived from the following two statements:
\begin{enumerate}
    \item[(i)] For any $\cvar\ge 0$ the minimum of $\langle\Phi(\mathbf{x}),\mathbf{1}\rangle$ on $-\cvar\Delta$ is attained at $-\cvar/(q-1)\cdot \mathbf{1}$.
    \item[(ii)] For any $\cvar> 0$ we have $\phi(\cvar)<\cvar$.
\end{enumerate}
Indeed, these statements imply that for any $\cvar> 0$ we have that $\Phi(-\cvar\Delta)\subseteq D_{\phi(\cvar)}\subsetneq D_\cvar$.
Clearly this is sufficient, since $D_\cvar=\cup_{0\le \cvar'\le \cvar} -\cvar'\Delta$ and therefore
\[
    \Phi(D_\cvar)=\cup_{0\le \cvar'\le \cvar}\Phi(-\cvar'\Delta)\subseteq \cup_{0\le \cvar'\le \cvar}D_{\phi(\cvar')}\subseteq D_{\phi(\cvar)}\subsetneq D_{\cvar}.
\]
We next prove both statements, starting with the first one.
\subsubsection{Statement (i)}
\begin{prop}\label{prop:infinite2iteration}
Let $\cvar\geq 0$. Then for any $\mathbf{x}\in -\cvar \Delta$ we have that
\[
    \langle\Phi(\mathbf{x}),\mathbf{1}\rangle \ge \left\langle\Phi\left(\frac{-\cvar}{q-1}\mathbf{1}\right),\mathbf{1}\right\rangle.
\]
Moreover, equality happens only at $\mathbf{x}=\frac{-\cvar}{q-1}\mathbf{1}$.
\end{prop}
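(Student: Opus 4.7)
The strategy is to exploit $S_{q-1}$-symmetry and reduce the inequality to a one-dimensional maximum principle. Using $F_{\infty;i}(\mathbf{x})=q(1-e^{x_i})/(S(\mathbf{x})+1)$ with $S(\mathbf{x}):=\sum_j e^{x_j}$, a direct computation gives
\[
\langle\Phi(\mathbf{x}),\mathbf{1}\rangle \;=\; \frac{q^2}{T(\mathbf{x})+1}-q, \qquad T(\mathbf{x})\;:=\;\sum_{i=1}^{q-1}\exp\!\bigl(F_{\infty;i}(\mathbf{x})\bigr).
\]
Because $s\mapsto q^2/(s+1)-q$ is strictly decreasing, Proposition~\ref{prop:infinite2iteration} becomes the claim that $T$ attains its maximum on $-\cvar\Delta$ uniquely at $\mathbf{x}_0:=-\cvar/(q-1)\cdot\mathbf{1}$. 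Both $T$ and $-\cvar\Delta$ are invariant under the $S_{q-1}$-action by coordinate permutations, and $\mathbf{x}_0$ is the unique fixed point of this action in the hyperplane $\{\sum x_i=-\cvar\}$, so $\mathbf{x}_0$ is automatically a critical point of $T$ on that hyperplane.

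The first step is to verify strict local maximality at $\mathbf{x}_0$. Writing $\mathbf{x}=\mathbf{x}_0+\epsilon\mathbf{v}$ with $\sum v_i=0$ and $a:=e^{-\cvar/(q-1)}$, $S_{q-1}$-equivariance forces the restricted Hessian of $T$ at $\mathbf{x}_0$ to be a scalar multiple of the identity, so it suffices to compute one entry; a short Taylor expansion identifies that scalar as a positive multiple of $(a-1)$, strictly negative for $\cvar>0$. For global uniqueness I would then apply Lagrange multipliers on $\{\sum x_i=-\cvar\}$: setting all $\partial T/\partial x_k$ equal and clearing denominators yields
\[
e^{x_k}\!\left[(S+1)\exp\!\bigl(q(1-e^{x_k})/(S+1)\bigr)+M\right] \;=\; C,\qquad k=1,\dots,q-1,
\]
with $M:=\sum_i e^{F_{\infty;i}(\mathbf{x})}(1-e^{x_i})$ and $S,C$ global in $\mathbf{x}$. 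Hence every coordinate is a root of the \emph{same} one-variable equation $h_{S,M}(z)=C$ in $z=e^{x_k}\in(0,1]$; if $h_{S,M}$ is strictly monotone there, all $x_k$ must coincide and the critical point is forced to be $\mathbf{x}_0$. Combined with the local Hessian computation, this promotes the inequality to a strict one for $\mathbf{x}\ne\mathbf{x}_0$.

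The main obstacle is the monotonicity of $h_{S,M}$: differentiation gives $h_{S,M}'(z)=(S+1-qz)\exp\!\bigl(q(1-z)/(S+1)\bigr)+M$, whose sign is not a priori fixed because the bracket $S+1-qz$ can turn negative near $z=1$. To control this I would exploit the identity $M=(S+1)q^{-1}\sum_i y_ie^{y_i}$ (with $y_i=F_{\infty;i}(\mathbf{x})$) together with the AM--GM lower bound $S\ge (q-1)e^{-\cvar/(q-1)}$ available on $-\cvar\Delta$, producing a lower bound on $M$ in terms of $S$ and $\cvar$ that keeps $h_{S,M}'>0$ throughout $(0,1]$. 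Should this pointwise bound prove too weak near $z=1$, a natural fallback is to treat critical points on boundary faces of $-\cvar\Delta$ (where some $x_k=0$) separately and induct on $q$, since on such a face $T$ reduces to the analogous functional on a lower-dimensional simplex.
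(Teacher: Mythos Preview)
Your reduction to maximizing $T(\mathbf{x})=\sum_i\exp(F_{\infty;i}(\mathbf{x}))$ and the Lagrange condition you write down are correct and in fact coincide with the paper's setup: with $\mathbf{y}=\exp(\mathbf{x})$ and $v_k(\mathbf{y})=y_k\bigl[(S+1)\exp\!\bigl(q(1-y_k)/(S+1)\bigr)+M\bigr]$, both routes arrive at the requirement $v_1=\cdots=v_{q-1}$ at an interior critical point, i.e.\ all $y_k$ solve the same equation $h_{S,M}(z)=C$. The genuine gap is exactly where you place it. You do not prove that $h_{S,M}$ is monotone on $(0,1]$, and the AM--GM bound you gesture at is not carried out; since $h_{S,M}'(1)=S+1-q+M$ and $S\le q-1$ on $-\cvar\Delta$, the sign really does hinge on a lower bound for $M$ that is nowhere supplied. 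Worse, your fallback---treat boundary faces where some $x_k=0$ and induct on $q$---fails as stated: on such a face $T$ does \emph{not} become the $(q-1)$-analogue of itself, because $F_{\infty;i}=q(1-e^{x_i})/(S+1)$ still carries the factor $q$ and the full sum $S$ (now with an extra term $e^{x_k}=1$), not $q-1$ and a shorter sum. So there is no clean dimension reduction, and ``negative Hessian at $\mathbf{x}_0$ plus unique interior critical point'' says nothing about possible boundary maxima.

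The paper sidesteps both difficulties by a descent argument rather than a critical-point classification. At any ordered non-symmetric point $0\ge x_1=\cdots=x_\ell>x_{\ell+1}\ge\cdots$ it exhibits a direction $\mathbf{w}\perp\mathbf{1}$, staying inside the simplex, along which $\langle\Phi,\mathbf{1}\rangle$ strictly decreases; hence no such point can be the minimizer. The sign of that directional derivative reduces to $v_\ell(\mathbf{y})>v_{\ell+1}(\mathbf{y})$, which is precisely $h_{S,M}(y_\ell)>h_{S,M}(y_{\ell+1})$ for the \emph{specific} $(S,M)$ determined by $\mathbf{y}$---a strictly weaker claim than your global monotonicity. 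This is then established in two steps: a convexity-based ``balancing'' lemma equalizes the trailing coordinates $y_{\ell+2},\dots,y_{q-1}$, reducing to configurations with at most three distinct values, and the resulting three-variable inequality is settled by an explicit (computer-assisted) calculation.
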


Before giving a proof, let us fix some further notation. 
By definition we have
\[
    \langle \Phi(\mathbf{x}),\mathbf{1}\rangle =\sum_{i=1}^{q-1}q\frac{1-e^{F_{\infty;i}(\mathbf{x})}}{\sum_{j=1}^{q-1}e^{F_{\infty;j}(\mathbf{x})}+1}=\frac{q^2}{{\sum_{j=1}^{q-1}e^{F_{\infty;j}(\mathbf{x})}+1}}-q,
\]
where we recall that $F_{\infty;j}$ denotes the $j$th coordinate function of $F_\infty$.
Thus the $i$th coordinate of the gradient of $\langle \Phi(\mathbf{x}),\mathbf{1}\rangle$ is given by
\begin{align*}
    \psi_i(\mathbf{x})&:=\frac{-q^2}{\left({\sum_{j=1}^{q-1}e^{F_{\infty;j}(\mathbf{x})}+1}\right)^2}\left(\sum_{j=1}^{q-1}e^{F_{\infty;j}(\mathbf{x})}\cdot \frac{\partial F_{\infty;j}}{\partial x_i}(\mathbf{x})\right)\\
    &=\frac{q^3e^{x_i}\left(e^{F_{\infty;i}(\mathbf{x})}(1+\sum_{j=1}^{q-1}e^{x_j})+\sum_{j=1}^{q-1}e^{F_{\infty;j}(\mathbf{x})}(1-e^{x_j})\right)}{\left({\sum_{j=1}^{q-1}e^{F_{\infty;j}(\mathbf{x})}+1}\right)^2\left(\sum_{j=1}^{q-1}e^{x_j}+1\right)^2}.
\end{align*}

Let us define the following functions $v_i:\mathbb{R}^{q-1}\to\mathbb{R}$ for $i=1,\dots,q-1$ as
\[
    v_i(\mathbf{x}):=x_i \left(e^{G_{i}} (1+ \sum_{\substack{j=1}}^{q-1} x_j) + \sum_{\substack{j=1}}^{q-1} e^{G_{j}} (1 - x_j)  \right),
\]
where we write
\[
    G_i:=G_{\infty;i}(\mathbf{x}) = \frac{q (1-x_i)}{1 + x_1 + \cdots + x_{q-1}}.
\]

Then we see that 
\[
    \psi_i(\mathbf{x})=\frac{q^3}{\left({\sum_{j=1}^{q-1}e^{F_{\infty;j}(\mathbf{x})}+1}\right)^2\left(\sum_{j=1}^{q-1}e^{x_j}+1\right)^2}\cdot v_i(e^{x_1},\dots,e^{x_{q-1}}),
\]
and $\psi_1(\mathbf{x})=\dots =\psi_{q-1}(\mathbf{x})$ if and only if $v_1(\exp(\mathbf{x}))=\dots=v_{q-1}(\exp(\mathbf{x}))$.

\begin{proof}[Proof of Proposition~\ref{prop:infinite2iteration}]
First of all observe that the function $\langle\Phi(\mathbf{x}),\mathbf{1}\rangle$ is invariant under the permutation of the coordinates of $\mathbf{x}$. Thus we can assume that 
\[
    \mathbf{x}\in U:=\{\mathbf{y}\in\mathbb{R}^{q-1}~|~0\ge y_1 \dots\ge y_{q-1}  \}
\]
 and not all the coordinates of $\mathbf{x}$ are equal.

Now it is enough to show that there exists a vector $\mathbf{0}\neq \mathbf{w}\in\mathbb{R}^{q-1}$ such that in the direction of $\mathbf{w}$ the function is (strictly) decreasing, $\langle\mathbf{w},\mathbf{1}\rangle=0$ and $\mathbf{x}+t_0\mathbf{w}\in U$ for some small $t_0>0$.
Let \[\ell=\min\{1\le i\le q-2 ~|~x_i>x_{i+1}\},\] which is finite, since not all of the coordinates of $\mathbf{x}$ are equal. 

We claim that $\mathbf{w}=-\frac{\mathbf{e}_1+\dots+\mathbf{e}_{\ell}}{\ell}+\mathbf{e}_{\ell+1}$ satisfies the desired conditions. 
Clearly, $\mathbf{w}$ is perpendicular to $\mathbf{1}$ and $\mathbf{x}+t\mathbf{w}\in U$ for $t$ small enough. 
Now let us calculate the derivative of 
\[
    g(t):=\langle\Phi(\mathbf{x}+t\mathbf{w}),\mathbf{1}\rangle.
\]
Using the notation defined above, we obtain
\begin{align*}
    g'(0)=&-\frac{\psi_1(\mathbf{x})+\dots+\psi_{\ell}(\mathbf{x})}{\ell}+\psi_{\ell+1}(\mathbf{x})\\
    &= -\psi_{\ell}(\mathbf{x})+\psi_{\ell+1}(\mathbf{x})\\
    &=-C\cdot (v_{\ell}(\exp(\mathbf{x}))-v_{\ell+1}(\exp(\mathbf{x})))
    \\
    &=-C\cdot (v_{\ell}(\mathbf{y})-v_{\ell+1}(\mathbf{y})),
\end{align*}
where $C>0$ and  $\mathbf{y}=\exp(\mathbf{x})$. 
In particular, \[1\ge y_1=y_2=\ldots=y_\ell>y_{\ell+1}\ge \ldots \ge y_{q-1}\ge 0.\]
So to conclude that $g'(0)<0$ and finish the proof, we need to show that \begin{equation}\label{eq:condition on y}
    v_\ell(\mathbf{y})-v_{\ell+1}(\mathbf{y})>0.
\end{equation}
Lemma~\ref{lemma:balancing} shows that we may assume $\mathbf{y}$ satisfies $1\ge y_1=y_2=\ldots=y_\ell>y_{\ell+1}\ge y_{\ell+2}=\ldots =y_{q-1}\ge 0$.
Lemma~\ref{lemma:3variable} below shows that for those vectors $\mathbf{y}$ \eqref{eq:condition on y} is indeed true.
So by combining Lemma~\ref{lemma:balancing} and Lemma~\ref{lemma:3variable} below we obtain~\eqref{eq:condition on y} and finish the proof.
\end{proof}

\begin{lemma}\label{lemma:balancing}
If $1\ge y_1=y_2\ldots=y_\ell>y_{\ell+1}\ge \ldots\ge y_{q-1}\ge 0$ for some $1\le \ell \le q-2$, then 
\[
    v_\ell(\mathbf{y})-v_{\ell+1}(\mathbf{y})\ge v_\ell(\mathbf{x})-v_{\ell+1}(\mathbf{x}),
\]
where $\mathbf{x}\in\mathbb{R}^{q-1}$ is defined as 
\[
    x_j=\left\{\begin{array}{cc}y_j&\textrm{ if $j\le  \ell+1$}\\ \frac{y_{\ell+2}+\dots+y_{q-1}}{q-\ell-2}&\textrm{ if $j>\ell+1$}\end{array}\right.
\]
for $1\le j\le q-1$.
\end{lemma}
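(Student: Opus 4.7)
The plan is to exploit the index-independence of almost all of $v_i$. Writing $S := 1 + \sum_{j=1}^{q-1} y_j$ and $T := \sum_{j=1}^{q-1} e^{G_j}(1-y_j)$, one sees directly from the definition that
\[
v_i(\mathbf{y}) = y_i\bigl(e^{G_i} S + T\bigr),
\]
and crucially both $S$ and $T$ are symmetric functions of the $y_j$'s. Subtracting yields
\[
v_\ell(\mathbf{y}) - v_{\ell+1}(\mathbf{y}) = S\bigl(y_\ell\, e^{G_\ell} - y_{\ell+1}\, e^{G_{\ell+1}}\bigr) + T\,(y_\ell - y_{\ell+1}).
\]

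The next observation is that replacing the tail $y_{\ell+2},\dots,y_{q-1}$ by their common average $\bar a$ does not change the sum $\sum_j y_j$, hence leaves $S$, and consequently each $G_j$ for $j\le\ell+1$, invariant. In particular the entire first summand on the right-hand side above, as well as the factor $y_\ell - y_{\ell+1}$ multiplying $T$, are unchanged under the substitution $\mathbf{y} \mapsto \mathbf{x}$. Therefore
\[
[v_\ell(\mathbf{y}) - v_{\ell+1}(\mathbf{y})] - [v_\ell(\mathbf{x}) - v_{\ell+1}(\mathbf{x})] = (y_\ell - y_{\ell+1})\bigl(T(\mathbf{y}) - T(\mathbf{x})\bigr),
\]
and since $y_\ell - y_{\ell+1} > 0$ the claim reduces to showing $T(\mathbf{y}) \ge T(\mathbf{x})$.

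For this final step, the terms indexed by $j\le \ell+1$ contribute identically to $T(\mathbf{y})$ and $T(\mathbf{x})$, so after setting $\lambda := q/S > 0$ and $u_j := 1 - y_j \in [0,1]$, the inequality becomes
\[
\sum_{j=\ell+2}^{q-1} f(u_j) \;\ge\; (q-\ell-2)\, f(\bar u), \qquad f(u) := u\, e^{\lambda u},
\]
where $\bar u = 1-\bar a$ is the arithmetic mean of $u_{\ell+2},\dots,u_{q-1}$. This is exactly Jensen's inequality, and it applies because $f''(u) = \lambda(2+\lambda u)e^{\lambda u}\ge 0$ on $[0,\infty)$, so $f$ is convex there.

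I do not anticipate a substantive obstacle: the only real insight needed is to spot that $v_i$ factors with $e^{G_i}$ as the only $i$-dependent exponential and with $S, T$ symmetric in the coordinates. Once this is observed, invariance of $S$ under the averaging step collapses the problem to a one-variable Jensen inequality for an explicitly convex function, and no delicate estimates are required.
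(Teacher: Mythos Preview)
Your argument is correct. Both your proof and the paper's rest on the same convexity fact, namely that $u\mapsto u\,e^{\lambda u}$ is convex for $\lambda>0$, but you organize it more efficiently. The paper proceeds by reducing to a single pairwise averaging step (replacing $y_i,y_{i+1}$ by their common mean for one $i\ge\ell+2$), computes the second derivative of the resulting one-parameter family explicitly, uses a symmetry $g(t)=g(y_i-y_{i+1}-t)$ to locate the minimum, and then invokes continuity to pass from iterated pairwise averaging to the full average. You instead isolate the decomposition $v_i=y_i(e^{G_i}S+T)$ with $S,T$ symmetric, observe that $S$ (and hence $G_\ell,G_{\ell+1}$) is invariant under the averaging, and collapse the whole problem to a single application of Jensen's inequality for $f(u)=u\,e^{\lambda u}$ on the tail coordinates. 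Your route is shorter and avoids the continuity/iteration device; the paper's route makes the pairwise mechanism explicit, which is conceptually the same but more laborious.
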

\begin{proof}
By continuity, it suffices to show 
\begin{equation}\label{eq:condition yprime}
    v_\ell(\mathbf{y})-v_{\ell+1}(\mathbf{y})\ge v_\ell(\mathbf{x})-v_{\ell+1}(\mathbf{x}),
\end{equation}
where $\mathbf{x}\in\mathbb{R}^{q-1}$ is defined as
\[
    x_j=\left\{\begin{array}{cc}y_j&\textrm{ if $j\neq i,i+1$}\\ \frac{y_i+y_{i+1}}{2}&\textrm{ if $j=i$ or $j=i+1$}\end{array}\right.
\]
for $1\le j\le q-1$ and any $i\geq \ell+2.$

For $t\in \mathbb{R}$ we define $\mathbf{y}(t)$ by 
\[
    y_j(t):=\left\{\begin{array}{cc}
        y_{j} & \textrm{if $j\neq i,i+1$}\\
        y_i-t & \textrm{if $j=i$}\\
        y_{i+1}+t & \textrm{if $j=i+1$}
    \end{array}\right.
\]
for $j=1,\ldots,q-1$.
Note that $\mathbf{y}(0)=\mathbf{y}$ and $\mathbf{y}(y_i/2-y_{i+1}/2)=\mathbf{x}$.
We further define
\begin{align*}
    \Delta(t):=&v_\ell(\mathbf{y}(t))-v_{\ell+1}(\mathbf{y}(t)).
\end{align*}

After a straightforward calculation we can express $\Delta(t)$ as 
\begin{align*}
    \Delta(t)&=y_\ell e^{G_\ell}(1+\sum_{j\ge 1}^{q-1} y_j)-y_{\ell+1}e^{G_{\ell+1}}(1+\sum_{j\ge 1}^{q-1} y_j)\\
    &+y_\ell\sum_{j\neq i,i+1}e^{G_j}(1-y_j)-y_{\ell+1}\sum_{j\neq i,i+1}e^{G_j}(1-y_j)\\
    &+ (y_\ell-y_{\ell+1})\left(e^{G_i(t)}(1-y_i+t)+e^{G_{i+1}(t)}(1-y_{i+1}-t)\right),
\end{align*}
where we write
$
    G_\ell:=G_{\infty;\ell}(\mathbf{y}(t)) = \frac{q (1-y_\ell)}{1 + y_1 + \cdots + y_{q-1}},
$
for $\ell\not \in \{ i,i+1\}$ and we write $G_\ell(t) = G_{\infty;\ell}(\mathbf{y}(t))$ when $\ell \in \{i,i+1\}$. This notation indicates that $G_\ell$ is a constant function of $t$ when $\ell\not \in \{ i,i+1\}$.
Now observe that the function appearing in the last row,
\[
    g(t):=e^{G_i(t)}(1-y_i+t)+e^{G_{i+1}(t)}(1-y_{i+1}-t),
\]
is convex on $t\in[0,y_i-y_{i+1}]$, since its second derivative is given by
\begin{align*}
    g''(t)&=e^{G_i(t)}\frac{(1-y_i+t)q^2}{(1+y_1+\dots+y_{q-1})^2}+2e^{G_i(t)}\frac{q}{1+y_1+\dots+y_{q-1}}\\
    &+e^{G_{i+1}(t)}\frac{(1-y_{i+1}-t)q^2}{(1+y_1+\dots+y_{q-1})^2}+2e^{G_{i+1}(t)}\frac{q}{1+y_1+\dots+y_{q-1}}>0.
\end{align*}
As $g(t)=g(y_{i}-y_{i+1}-t)$, we obtain that $g(t)$ has a unique minimizer in $[0,y_{i}-y_{i+1}]$ exactly at $t$ such that $=y_{i}-y_{i+1}-t$. In other words,
\[
t=\frac{y_{i}-x_{i+1}}{2}   
\]
is the unique minimizer of $g(t)$ on this interval and thus for $\Delta(t)$. 
This implies \eqref{eq:condition yprime} and hence the lemma.
\end{proof}

\begin{lemma}\label{lemma:3variable}
    Let $1 \geq x_1 > x_2 \geq x_3 \geq 0$ and $q-2 \geq l \geq 1$. Then 
    \[
        v_{l}(\underbrace{x_1, \cdots, x_1}_{l}, x_2, \underbrace{x_3, \cdots, x_3}_{q-l-2}) > 
        v_{l+1}(\underbrace{x_1, \cdots, x_1}_{l}, x_2, \underbrace{x_3, \cdots, x_3}_{q-l-2}).
    \]
\end{lemma}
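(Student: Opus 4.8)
The plan is to compute $v_l(\mathbf{y})-v_{l+1}(\mathbf{y})$ explicitly, collapse it to a single real inequality by an algebraic identity, and then close with the elementary bound $1-e^{-u}<u$.

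First I would fix notation. Write $a=x_1$, $b=x_2$, $c=x_3$, $m=l$ and $n=q-l-2$, so that $m\ge 1$, $n\ge 0$ and $m+n=q-2$; set $\delta=a-b>0$. Let $\mathbf{y}=(\underbrace{a,\ldots,a}_{m},b,\underbrace{c,\ldots,c}_{n})\in\mathbb{R}^{q-1}$, put $T=1+\sum_j y_j=1+ma+b+nc$, and note $c\le b<a\le 1$, so in particular $c<1$. The quantity $G_{\infty;j}(\mathbf{y})$ takes exactly the three values $G_a:=q(1-a)/T$, $G_b:=q(1-b)/T$, $G_c:=q(1-c)/T$ on the three blocks, and $G_a<G_b\le G_c$. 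Using $v_i(\mathbf{y})=y_i\bigl(e^{G_i}T+A\bigr)$ with $A:=\sum_j e^{G_j}(1-y_j)=m e^{G_a}(1-a)+e^{G_b}(1-b)+n e^{G_c}(1-c)$, a short computation gives
\[
v_l(\mathbf{y})-v_{l+1}(\mathbf{y})=T\bigl(ae^{G_a}-be^{G_b}\bigr)+\delta A=e^{G_a}P+e^{G_b}Q+e^{G_c}R,
\]
where $P:=aT+m\delta(1-a)$, $Q:=-bT+\delta(1-b)$ and $R:=n\delta(1-c)\ge 0$.

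The crucial step is the identity $P+Q=\delta\bigl(q-n(1-c)\bigr)=q\delta-R$, obtained by expanding $P+Q=\delta T+m\delta(1-a)+\delta(1-b)=\delta\bigl(T+m(1-a)+(1-b)\bigr)$ and noting $T+m(1-a)+(1-b)=2+m+nc=q-n(1-c)$, since $m+2=q-n$. Substituting $Q=q\delta-R-P$ and factoring out $e^{G_b}>0$ turns the difference into
\[
v_l(\mathbf{y})-v_{l+1}(\mathbf{y})=e^{G_b}\Bigl[\,q\delta-\bigl(1-e^{G_a-G_b}\bigr)P+\bigl(e^{G_c-G_b}-1\bigr)R\,\Bigr].
\]
Since $G_c\ge G_b$ and $R\ge 0$, the last summand is nonnegative, so it suffices to prove $\bigl(1-e^{G_a-G_b}\bigr)P<q\delta$. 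Here $G_a-G_b=-q\delta/T$, so $1-e^{G_a-G_b}=1-e^{-q\delta/T}<q\delta/T$ by the inequality $1-e^{-u}<u$ for $u>0$; and $P\le T$, because $P\le T$ is equivalent to $m\delta(1-a)\le T(1-a)$, which holds as $T\ge 1+m\delta>m\delta$ (using $ma=m(b+\delta)\ge m\delta$ and $b,nc\ge 0$), with equality $P=T$ when $a=1$. Hence $\bigl(1-e^{G_a-G_b}\bigr)P<(q\delta/T)\cdot T=q\delta$, which gives the strict inequality.

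The main obstacle is the bookkeeping with three exponentials pulling in opposite directions: the term carrying the \emph{largest} relevant exponential $e^{G_b}$ is the one whose coefficient $Q$ can be negative, while the obviously favourable term $e^{G_a}P$ carries the \emph{smallest}. A naive estimate that simply discards the $e^{G_c}R$ term breaks down exactly when $n\ge 1$ and $a$ is close to $1$, since then $P\approx T$ and there is no slack. What rescues the proof is the exact cancellation $P+Q=q\delta-R$: the $n(1-c)$-deficit hidden in $Q$ is compensated term for term by $R$, after which one needs nothing beyond $P\le T$ together with $1-e^{-u}<u$. Getting that identity right — in particular the relation $m+2=q-n$ between the block sizes — is where essentially all of the content lies; the rest is routine.
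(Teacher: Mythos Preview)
Your proof is correct, and it is considerably more direct than the paper's argument. The paper proceeds by embedding the difference $v_l-v_{l+1}$ into a one-parameter family $\Delta(y_1(t),y_2(t),y_3(t);t)$ in which the three exponents $A_i$ are held constant by an explicit substitution for $y_i(t)$; this reduces the problem to showing that a certain linear function $r(t)$ is positive at $t=q-1$, which is then done by checking $r(l+1)>0$ and that the slope of $r$ is positive, each requiring further case splits and algebraic manipulation (supported by Mathematica code in the appendix). Your route instead stays with the original three-term expression $e^{G_a}P+e^{G_b}Q+e^{G_c}R$ and exploits the exact identity $P+Q+R=q\delta$. After rewriting and discarding the nonnegative term $(e^{G_c-G_b}-1)R$, everything collapses to the single estimate $(1-e^{-q\delta/T})P<q\delta$, which follows from $1-e^{-u}<u$ together with the easy bound $P\le T$. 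The paper's parametrization-in-$t$ trick is a flexible device that could in principle handle more delicate variants, but for the lemma as stated your identity-based argument is shorter, entirely elementary, and avoids any need for computer algebra.
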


\begin{proof}
The algebraic manipulations that are done in this proof, while elementary, involve quite large expressions. Therefore we have supplied additional Mathematica code in Appendix~\ref{sec: Mathematica code} that can be used to verify the computations. We define
    \begin{align*}
        \Delta(y_1,y_2,y_3;t):= 
        &\left(y_1
        y_3 (t-l-1)+(l+1) y_1+(l+1) y_1 y_2-l y_2\right) e^{A_{1}(y_1,y_2,y_3;t)}+\\
        &\left(-y_2 y_3 (t-l-1)-(l+1) y_1 y_2+y_1-2 y_2\right) e^{A_{2}(y_1,y_2,y_3;t)} +\\
           &\left(y_1-y_2\right) \left(1-y_3\right) (t-l-1) e^{A_{3}(y_1,y_2,y_3;t)},
  \end{align*}
  where 
  \[
        A_{i}(y_1,y_2,y_3;t): = \frac{(t+1)(1-y_i)}{1+ly_1 + y_2 + (t-(l+1))y_3}
  \]
  for $i = 1,2,3$ (see Listing~\ref{code: Ai Delta}). One can check that
  \[
    \Delta(x_1,x_2,x_3;q-1) = v_{l}(x_1, \cdots, x_1, x_2, x_3, \cdots, x_3) - v_{l+1}(x_1, \cdots, x_1, x_2, x_3, \cdots, x_3).
  \]
  We will treat $t$ as a variable and vary it while keeping the values that appear in the exponents constant. To that effect let $C_i = A_i(x_1,x_2,x_3;q-1)$ and define 
  \begin{align*}
    y_1(t) &= \frac{C_1 (l-t-1)+C_3 (t-l-1)+C_2+t+1}{C_3 (t-l-1)+C_1 l+C_2+t+1},\\
    y_2(t) &= \frac{C_3 (t-l-1)+C_1 l-C_2 t+t+1}{C_3 (t-l-1)+C_1
   l+C_2+t+1}, \\
    y_3(t) &= \frac{C_1 l-C_3 (l+2)+C_2+t+1}{C_3 (t-l-1)+C_1 l+C_2+t+1}.
  \end{align*}
  These values are chosen such that for $t_0=q-1$ we have $y_i(t_0) = x_i$ and $A_i(y_1(t),y_2(t),y_3(t);t) = C_i$ independently of $t$ for $i = 1,2,3$ (see Listings~\ref{code: yi}~and~\ref{code: yi2}).
  Therefore $\Delta(y_1(t),y_2(t),y_3(t);t)$ is a rational function of $t$ and we want to show that it is positive at $t = q-1$. We can explicitly calculate that 
  \[
    \Delta(y_1(t),y_2(t),y_3(t);t) = 
    \left(\frac{1+t}{C_3 (t-l-1)+C_1 l+C_2+t+1}\right)^2 \cdot r(t),
  \]
  where $r$ is a linear function (see Listing~\ref{code: r}). It is thus enough to show that $r(q-1) > 0$. We will do this by showing that $r(l+1) > 0$ and that the slope of $r$ is positive.
  We find that $r(l+1)$ is equal to
  \[
    r(l+1)=u_1\cdot e^{C_1}+u_2\cdot e^{C_2},
  \]
  where
  \begin{align*}
      u_1&=2+l+C_2-2C_1+lC_1C_2-lC_1^2\\
      u_2&=-\left(2+l+lC_1-(l+1)C_2+C_1C_2-C_2^2\right).
  \end{align*}
  This is part of the output of Listing~\ref{code: r slope}.
  Note that by construction, since $1\geq x_1>x_2\geq x_3$, we have $0\leq C_1<C_2\leq C_3.$
Therefore the sum of the coefficients of $e^{C_1}$ and $e^{C_2}$ satisfies
  \begin{align*}
      u_1+u_2&=(l+2)(C_2-C_1)+(l-1)C_1C_2-lC_1^2+C_2^2\\
      &=(l+2+C_2+lC_1)(C_2-C_1)>0.
  \end{align*}
  Now we will separate two cases depending on the sign of  the coefficient of $u_2$. If $u_2$ is non-negative, then
  \begin{align*}
    r(l+1)&=u_1e^{C_1}+u_2e^{C_2}\ge u_1e^{C_1}+u_2e^{C_1}=(u_1+u_2)e^{C_1}>0.
  \end{align*}
  If $u_2$ is negative, then 
  \[
    2+(1+C_1-C_2)l>C_2-C_1C_2+C_2^2=(1+C_2-C_1)C_2.
  \] 
  In particular $2+(1+C_1-C_2)l>0$. Thus
  \begin{align*}
      r(l+1)&=e^{C_2}(u_1e^{C_1-C_2}-u_2)\\
      &\ge (1+C_1-C_2)u_1-u_2= C_1(C_2-C_1)(2+(1+C_1-C_2)l)>0.
  \end{align*}
  The slope of $r$ is given by 
  \[
    s:=\left(1+ C_3 - C_1\right)e^{C_1} - (1 + C_3 - C_2) e^{C_2} + (C_2-C_1) C_3 e^{C_3}. 
  \]
  This is part of the output of Listing~\ref{code: r slope}.
  To show that this is positive we show that $s \cdot e^{-C_2}$ is positive. Because both $1+ C_3 - C_1$ and $C_2 - C_1$ are positive we find
  \begin{align*}
    s \cdot e^{-C_2} &= \left(1+ C_3 - C_1\right)e^{C_1-C_2} - (1 + C_3 - C_2)+ (C_2-C_1) C_3 e^{C_3-C_2} \\
    &\geq 
    \left(1+ C_3 - C_1\right)(1+C_1-C_2) - (1 + C_3 - C_2)+ (C_2-C_1) C_3 (1+C_3-C_2)\\
    &= 
    (C_2-C_1)(C_1 + C_3 (C_3 - C_2)),
  \end{align*}
  which is positive because $0\leq C_1 < C_2 \leq C_3$. This concludes the proof.
\end{proof}

We now continue with the second statement.
\subsubsection{Statement (ii)}
\begin{prop}
For any $x>0$ we have that
\[
    \left\langle\Phi\left(\frac{-x}{q-1}\mathbf{1}\right),\mathbf{1}\right\rangle > -x.
\]
\end{prop}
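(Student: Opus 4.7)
Since the $i$-th coordinate formula of $F_\infty$ in~\eqref{eq:def F infty coordinate} depends on the input only through $x_i$ and the sum $\sum_j e^{x_j}$, the diagonal line $\mathbb{R}\cdot\mathbf{1}$ is invariant under $F_\infty$. Writing $t:=\tfrac{x}{q-1}>0$ and introducing the scalar maps
\[
f(t):=q\,\frac{1-e^{-t}}{(q-1)e^{-t}+1},\qquad g(s):=q\,\frac{e^{s}-1}{(q-1)e^{s}+1},
\]
a direct computation yields $F_\infty(-t\mathbf{1})=f(t)\mathbf{1}$ and $F_\infty(f(t)\mathbf{1})=-g(f(t))\mathbf{1}$, whence
\[
\bigl\langle\Phi(-\tfrac{x}{q-1}\mathbf{1}),\mathbf{1}\bigr\rangle=-(q-1)\,g(f(t)).
\]
The proposition therefore reduces to the scalar inequality $g(f(t))<t$ for every $t>0$.

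\textbf{Reduction to a pointwise derivative inequality.}
The map $f:[0,\infty)\to[0,q)$ is a strictly increasing bijection with explicit inverse $f^{-1}(u)=\ln\tfrac{(q-1)u+q}{q-u}$. Setting $u=f(t)$, the desired inequality becomes $g(u)<f^{-1}(u)$ for $u\in(0,q)$, and since $g(0)=f^{-1}(0)=0$ it is enough to prove $g'(u)<(f^{-1})'(u)$ for all $u>0$. From
\[
g'(u)=\frac{q^{2}e^{u}}{((q-1)e^{u}+1)^{2}},\qquad (f^{-1})'(u)=\frac{q^{2}}{((q-1)u+q)(q-u)},
\]
this is in turn equivalent to the algebraic inequality
\begin{equation}\label{eq:key_sketch}
\bigl((q-1)e^{u}+1\bigr)^{2}>e^{u}\bigl((q-1)u+q\bigr)(q-u)\qquad\text{for all }u>0.
\end{equation}

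\textbf{Proof of~\eqref{eq:key_sketch} by chaining two convex estimates.}
The plan is to split~\eqref{eq:key_sketch} into two inequalities, each tight at $u=0$. First, strict convexity of $\exp$ (weighted Jensen with weights $(q-1)/q$ and $1/q$) yields $(q-1)e^{u}+1\ge q\,e^{(q-1)u/q}$, with equality iff $u=0$; squaring and using $2(q-1)/q=1+(q-2)/q$ gives
\[
\bigl((q-1)e^{u}+1\bigr)^{2}\ge e^{u}\cdot q^{2}e^{(q-2)u/q}.
\]
Second, we claim $q^{2}e^{(q-2)u/q}\ge((q-1)u+q)(q-u)$ for all $u\ge 0$: both sides equal $q^{2}$ at $u=0$, and the derivative of their difference equals
\[
q(q-2)\bigl(e^{(q-2)u/q}-1\bigr)+2(q-1)u,
\]
which is manifestly non-negative for $u\ge 0$ and $q\ge 3$, and strictly positive for $u>0$. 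Chaining the two estimates produces~\eqref{eq:key_sketch} and the proposition follows. The principal work lies in choosing these two sharp convex bounds: a naive AM-GM estimate $(q-1)e^{u}+1\ge 2\sqrt{q-1}\,e^{u/2}$ is too crude and fails, whereas the weighted Jensen bound matches the exponential growth rate of the left-hand side at $u=0$ exactly, reducing the remaining comparison to a single differentiation.
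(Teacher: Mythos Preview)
Your proof is correct, and after the common reduction to the scalar statement $g(f(t))<t$ (the paper's $\phi(x)<x$) it proceeds along a genuinely different route. The paper rewrites $\phi(x)<x$ as $f(x)<\log\frac{x+q(q-1)}{(q-1)(q-x)}$, then invokes the classical bound $\log b>2(b-1)/(b+1)$ to replace the logarithm by an explicit rational function; the resulting sufficient inequality simplifies to $(2q-2-x)\le(x+2q-2)e^{-x/(q-1)}$, which is dispatched by a one-line convexity check (the difference vanishes to second order at $0$ and is convex). You instead pass to the inverse $f^{-1}$ and compare derivatives, reducing to the algebraic inequality $((q-1)e^u+1)^2>e^u((q-1)u+q)(q-u)$; the weighted-Jensen step $(q-1)e^u+1\ge q\,e^{(q-1)u/q}$ then cleanly reduces this to a single differentiation. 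Both arguments are elementary and of comparable length; yours is more systematic in that no inspired choice of a logarithm approximant is needed, while the paper's is marginally shorter once that approximant is recognized. One minor imprecision: when you write ``$g'(u)<(f^{-1})'(u)$ for all $u>0$'' this should read $u\in(0,q)$, the domain of $f^{-1}$; your key algebraic inequality does hold for all $u>0$ (trivially for $u\ge q$), so the argument is unaffected.
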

\begin{proof}
The statement is equivalent to
\[
    \phi(x)<x.
\]
for $x>0$.
By definition we know that
\[
    \phi(x)=(q-1)\frac{q(e^{f(x)}-1)}{(q-1)e^{f(x)}+1},
\]
where 
\[
    f(x)=-q\frac{e^{-x/(q-1)}-1}{(q-1)e^{-x/(q-1)}+1}.
\]

First note that $\phi(\mathbb{R}_{>0})\subseteq (0,q)$. This means that if $x\ge q$, the statement holds. 
Thus we can assume that $0<x<q$. Now, the inequality $\phi(x)<x$ can be written as
\[
    e^{f(x)}<\frac{x+q(q-1)}{(q-1)(q-x)},
\]
because $q-x>0$. 
By taking logarithm of both sides, we see that $\phi(x)<x$ is equivalent to
\[
    -q\frac{e^{-x/(q-1)}-1}{(q-1)e^{-x/(q-1)}+1}<\log\left(\frac{x+q(q-1)}{(q-1)(q-x)}\right).
\]
Since $\frac{x+q(q-1)}{(q-1)(q-x)}> \frac{0+q(q-1)}{(q-1)q}\ge 1 $,
we can use the inequality $\log(b)>2\frac{b-1}{b+1}$ for $b=\frac{x+q(q-1)}{(q-1)(q-x)}$. Therefore, to show $\phi(x)<x$, it is sufficient to prove that
\[
    -q\frac{e^{-x/(q-1)}-1}{(q-1)e^{-x/(q-1)}+1}\le \frac{-2qx}{(q-2)x-2q(q-1)},
\]
or, equivalently
\[
    (2q-2-x)\le (x+2q-2)e^{-x/(q-1)}.
\]
This follows from the fact that $g(t)=(t+2q-2)e^{-t/(q-1)}-(2q-2-t)$ is a convex function on $\mathbb{R}_{\ge 0}$, its derivative satisfies $g'(0)=0$ and $g(0)=0$.
This concludes the proof.
\end{proof}

\subsection{Two iterations of \texorpdfstring{$F_d$}{Two iterations of Fd}}

As before, we view $\yvar=1/d$ as a continuous variable. 
Let us define $\Phi: \mathbb{R}^{q-1}  \times [0,\frac{1}{2}] \rightarrow \mathbb{R}^{q-1}$ by 
\[
\Phi(x_1,\ldots,x_{q-1},\yvar) = F_{1/\yvar}^{\circ 2} (x_1,\ldots,x_{q-1}).
\]
Note that this map is analytic in all its variables. 
For simplicity, if $\yvar^*$ is fixed, then we use the notation $\Phi_{\yvar^*}(x_1,\ldots,x_{q-1})$ for $\Phi(x_1,\ldots,x_{q-1},\yvar)|_{\yvar=\yvar^*}$, and if $\yvar=0$, then $\Phi(x_1,\ldots,x_{q-1}):=\Phi_0(x_1,\ldots,x_{q-1})$.
\begin{lemma}
There exist positive constants $A>0$ and $\cvar_0>0$, such that for any $0<\cvar\leq\cvar_0$ we have
\[
   \cvar-\phi(\cvar) \ge A\cvar^3.
\]
\end{lemma}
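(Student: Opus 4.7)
The approach is to compute the Taylor expansion of $\phi$ at $0$ through order three and read off the cubic coefficient. I claim this yields
\[
\phi(c) = c - \frac{c^3}{6(q-1)^2} + O(c^4),
\]
from which the lemma follows for any $A < 1/(6(q-1)^2)$ by absorbing the higher-order error into the cubic term for $c$ sufficiently small; for instance $A = 1/(12(q-1)^2)$ works once $c_0$ is chosen so that the $O(c^4)$ remainder is at most $c^3/(12(q-1)^2)$ on $(0,c_0]$.

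The computation is most cleanly organized by observing that $\phi = \psi \circ \psi$, where
\[
\psi(t) := (q-1)\, h\!\bigl(-t/(q-1)\bigr), \qquad h(u) := \frac{q(e^u-1)}{(q-1)e^u+1}.
\]
Indeed, the definition of $f$ given in the excerpt can be rewritten as $f(x) = -h(-x/(q-1))$, whence $\psi(\psi(x)) = (q-1)\, h(-h(-x/(q-1))) = (q-1)\, h(f(x)) = \phi(x)$. A direct series expansion of $h$ at $0$ (expanding numerator and denominator and dividing) gives
\[
h(u) = u - \tfrac{q-2}{2q}\, u^2 + \Bigl(\tfrac{1}{6} - \tfrac{q-1}{q^2}\Bigr)\, u^3 + O(u^4),
\]
and therefore $\psi(t) = -t + c_2 t^2 + c_3 t^3 + O(t^4)$ with $c_2 = -(q-2)/(2q(q-1))$ and $c_3 = -1/(6(q-1)^2) + 1/(q^2(q-1))$. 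In particular $\psi'(0) = -1$.

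For any smooth map $\psi$ with $\psi(0)=0$ and $\psi'(0)=-1$, a one-line expansion of $\psi \circ \psi$ gives
\[
\psi(\psi(t)) = t - 2(c_2^2 + c_3)\, t^3 + O(t^4).
\]
The unit linear coefficient and the vanishing of the quadratic correction are automatic, forced entirely by $\psi'(0)=-1$, so the entire problem reduces to evaluating $c_2^2 + c_3$ and checking its sign. The main obstacle is then this one arithmetic step: clearing the common denominator $12\, q^2 (q-1)^2$ turns the numerator of $c_2^2+c_3$ into $3(q-2)^2 - 2q^2 + 12(q-1)$, which collapses to $q^2$. Hence
\[
c_2^2 + c_3 = \frac{1}{12(q-1)^2} > 0,
\]
yielding the desired expansion of $\phi$ and thus the lemma. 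The only thing that could go wrong is a sign error in the expansion of $h$ or an arithmetic slip in the cubic simplification; the structure of the proof is otherwise forced.
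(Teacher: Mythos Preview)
Your proof is correct and follows essentially the same approach as the paper: compute the Taylor expansion of $\phi$ at $0$ through third order, obtain $\phi(c)=c-\tfrac{1}{6(q-1)^2}c^3+O(c^4)$, and conclude. Your observation that $\phi=\psi\circ\psi$ with $\psi'(0)=-1$ is a clean way to organize the computation and explains structurally why the quadratic term vanishes, but the paper arrives at the same expansion by directly expanding $f$ and $g$ and composing.
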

\begin{proof}
By definition we know that
\[
    \phi(x)=(g\circ f)(x)=(q-1)\frac{q(e^{f(x)}-1)}{(q-1)e^{f(x)}+1},
\]
where 
\begin{align*}
    f(x)&=-q\frac{e^{-x/(q-1)}-1}{(q-1)e^{-x/(q-1)}+1},\\
    g(x)&=(q-1)q\frac{e^x-1}{(q-1)e^x+1}.
\end{align*}

Let us calculate the Taylor expansion of $f(x)$ and $g(x)$ around 0:
\begin{align*}
    f(x)&=\frac{1}{q-1}x+\frac{q-2}{2(q-1)^2q}x^2+\frac{(q^2-6q+6)}{6(q-1)^3q^2}x^3+O(x^4),\\
    g(x)&=(q-1)x-\frac{(q-1)(q-2)}{2q}x^2+\frac{(q-1)(q^2-6q+6)}{6q^2}x^3+O(x^4).
\end{align*}
Thus their composition has the following Taylor expansion around $0$:
\begin{align*}
    (g\circ f)(x)&=x-\frac{1}{6(q-1)^2}x^3+O(x^4).
\end{align*}
This implies that there exists $\cvar_0>0$ and $A>0$, such that for any $\cvar_0\ge x\ge 0$ we have
\[
    x-\phi(x)\ge Ax^3,
\]
as desired.
\end{proof}

The next proposition implies forward invariance of $P_c$ under $F_d^{\circ 2}$ for $c$ small enough and $d$ large enough. 
\begin{prop}\label{prop:forwardunder2iterates q=3 w=w_c}
There exists $\cvar_0>0$ and $d_0>0$. Such that for all $\cvar\in (0,\cvar_0]$ and integers $d\geq d_0$ there exists $0<\cvar'<\cvar$ such that
\[F_d^{\circ 2}(\Dvar_\cvar) \subset \Dvar_{\cvar'}.
\]
\end{prop}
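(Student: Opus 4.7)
The plan is to mimic the infinite-case Proposition~\ref{prop:2step-forward infinity} via Taylor expansion of $F_d^{\circ 2}$ at $\mathbf{0}$, combined with uniform convergence $F_d\to F_\infty$ and the cubic gap $c-\phi(c)\geq Ac^3$ just established.

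First, I would verify sign preservation: for $\mathbf{y}\in\mathbb{R}^{q-1}_{>0}$ the denominator of $G_{d;i}(\mathbf{y})$ is positive once $d$ is large, so the sign of $G_{d;i}(e^{\mathbf{x}})$, and hence of $F_{d;i}(\mathbf{x})$, is opposite to that of $x_i$. Thus $F_d$ swaps $\mathbb{R}_{\leq 0}^{q-1}$ and $\mathbb{R}_{\geq 0}^{q-1}$, so $F_d^{\circ 2}(D_c)\subseteq\mathbb{R}_{\leq 0}^{q-1}$. To obtain $F_d^{\circ 2}(D_c)\subseteq D_{c'}$ for some $c'<c$, it then suffices to bound $\Psi_d(\mathbf{x}) := -\langle F_d^{\circ 2}(\mathbf{x}),\mathbf{1}\rangle$ from above on $D_c$.

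The key observation is that $D^2 F_d^{\circ 2}(\mathbf{0})=0$ for every $d$. Direct differentiation yields $DF_d(\mathbf{0}) = -\mathrm{Id}$ independent of $d$, and by the chain rule
\[
D^2(F_d\circ F_d)(\mathbf{0})(\mathbf{v},\mathbf{w}) \;=\; D^2 F_d(\mathbf{0})(-\mathbf{v},-\mathbf{w}) \;-\; D^2 F_d(\mathbf{0})(\mathbf{v},\mathbf{w}) \;=\; 0.
\]
Since $\Phi(\mathbf{x},y)=F_{1/y}^{\circ 2}(\mathbf{x})$ is jointly analytic in $(\mathbf{x},y)$ near the origin, the higher Taylor coefficients depend continuously on $1/d$; hence $F_d^{\circ 2}(\mathbf{x}) = \mathbf{x} + O(\|\mathbf{x}\|^3)$ with cubic coefficient and remainder uniform for $d\geq d_0$. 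Parameterising $-c''\Delta$ by $\mathbf{x}=-c''\mathbf{t}$ with $\mathbf{t}\in\Delta$ (so $\sum t_i=1$) gives
\[
\Psi_d(-c''\mathbf{t}) \;=\; c'' + c''^3\gamma_d(\mathbf{t}) + O(c''^4),
\]
where $\gamma_d(\mathbf{t})$ is continuous in $\mathbf{t}$ and in $1/d$, and the error is uniform.

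By Proposition~\ref{prop:2step-forward infinity}, $\Psi_\infty(-c''\mathbf{t})\leq\phi(c'')$ on $\Delta$, and the expansion $\phi(c'')=c''-Bc''^3+O(c''^4)$ with $B=1/(6(q-1)^2)>0$ from the preceding lemma forces $\gamma_\infty(\mathbf{t})\leq -B$ uniformly on $\Delta$. By continuity in $1/d$, for $d\geq d_0$ sufficiently large $\gamma_d(\mathbf{t})\leq -B/2$ uniformly on $\Delta$. Shrinking $c_0$ to absorb the quartic remainder, we obtain $\Psi_d(-c''\mathbf{t})\leq c''-Bc''^3/4<c''$ for all $\mathbf{t}\in\Delta$, $c''\in(0,c_0]$, and $d\geq d_0$. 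Since $D_c=\bigcup_{c''\in[0,c]}(-c''\Delta)$ and the map $c''\mapsto c''-Bc''^3/4$ is increasing on $[0,c_0]$, maximising yields $\max_{D_c}\Psi_d \leq c-Bc^3/4 =: c' < c$, which combined with the sign-preservation step gives $F_d^{\circ 2}(D_c)\subseteq D_{c'}$.

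The main obstacle is the Hessian vanishing in the second paragraph; this cancellation is what makes the approach succeed uniformly in $c$. Without it, the Taylor expansion of $\Psi_d$ would carry a nonzero $O(c^2)$ term off-centre, and the generic $O(1/d)$ perturbation error between $F_d^{\circ 2}$ and $F_\infty^{\circ 2}$ would overwhelm the infinite-case cubic gap for $c$ smaller than any fixed multiple of $1/d$. The universal vanishing $D^2F_d^{\circ 2}(\mathbf{0})=0$ removes the quadratic term entirely, so that the uniform comparison between $\gamma_d$ and $\gamma_\infty$ already closes the argument.
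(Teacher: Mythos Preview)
Your proof is correct and follows essentially the same strategy as the paper's: both arguments hinge on the second-order Taylor term of $F_d^{\circ 2}$ at $\mathbf{0}$ vanishing identically in $d$ (because $DF_d(\mathbf{0})=-\mathrm{Id}$), together with the cubic gap $c-\phi(c)\asymp c^3$ and continuity of the third-order coefficients in $1/d$. The packaging differs slightly---the paper bounds $\|\Phi_y(\mathbf{x})-\Phi_0(\mathbf{x})\|$ by $O(c^4)+A_3(y)c^3$ and then invokes $\Phi_0(D_c)\subseteq D_{\phi(c)}$ via a ball argument, whereas you extract the cubic coefficient $\gamma_d(\mathbf{t})$ of the scalar $\Psi_d$ directly and compare it to $\gamma_\infty(\mathbf{t})\leq -B$---but the content is the same. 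Your explicit sign-preservation step (that $F_d$ exchanges the nonpositive and nonnegative orthants) is a point the paper uses tacitly when it intersects with $(-\infty,0]^{q-1}$.
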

\begin{proof}
By the previous lemma we know that there is a $\cvar'_0>0$ and an $A>0$, such that  for any $\cvar\le \cvar'_0$ we have 
\[
    \|\Phi(-\cvar/(q-1)\cdot \mathbf{1})+\cvar/(q-1)\cdot \mathbf{1}\|\ge A\cvar^3.
\]
Here we denote by $\|\mathbf{x} \|=\left(\sum_{i=1}^{q-1}x_i^2\right)^{1/2}$, the standard $2$-norm on $\mathbb{R}^{q-1}.$
By Proposition~\ref{prop:infinite2iteration}, we have that for any $\mathbf{x}\in \Dvar_\cvar$, $\Phi(\mathbf{x})$ is contained in $D_{\phi(\cvar)}$. 
Therefore, denoting by $B_r(y)$ the ball of radius $r$ around $y$,

\begin{equation}\label{eq:strict inequality 2-forward}
 B_{A\cvar^{3}/2}(\Phi(\mathbf{x}))\cap (-\infty,0]^{q-1}\subseteq \Dvar_{\phi(\cvar)+A\cvar^3/2} \subsetneq  \Dvar_\cvar.
 \end{equation}

Now let us consider the Taylor approximation of $\Phi_{\yvar}(x_1,\ldots,x_{q-1})$ at $\mathbf{0} = (0,\ldots,0)$. 
Since for any $\yvar^*\in[0,1]$ the map $F_{1/y^*}(x_1,\ldots,x_{q-1})$ has $\mathbf{0}$ as a fixed point of derivative $-\textrm{Id}$, there exists constants $\cvar_1,C_1\ge 0$ such that for any $\yvar \in[0,1]$  and $\mathbf{x} = (x_1,\ldots,x_{q-1})\in [-\cvar_1,0]^{q-1}$  we have
\[
    \|\Phi_{\yvar}(\mathbf{x}) - \textrm{Id}(\mathbf{x})-T_{3,\yvar}(\mathbf{x})\|\le C_1\cdot \|\mathbf{x}\|^4,
\]
where $\textrm{Id}(\mathbf{x})+T_{3,\yvar}(\mathbf{x})$ is the 3rd order Taylor approximation of $\Phi_{\yvar}(\mathbf{x})$ at $\mathbf{0}$. 
Note that the second order term is equal to $0$ because the derivative of $F_{1/y^*}(x_1,\ldots,x_{q-1})$ at $\mathbf{0}$ equals $-\textrm{Id}$.
In particular, $T_{3,\yvar}(\mathbf{x})=T_\yvar((\mathbf{x}),(\mathbf{x}),(\mathbf{x}))$ for some multi-linear map $T_\yvar \in\textrm{Mult}((\mathbb{R}^{q-1})^3,\mathbb{R}^{q-1})$, and as $\yvar \to 0$ the map $T_{3,\yvar}$ converges uniformly on $[-q,0]^{q-1}$ to  $T_{3,0}$. Specifically, for any $\mathbf{x} = (x_1,\ldots,x_{q-1})\in[-\cvar_1,0]^{q-1}$
\[
    \|T_{3,\yvar}(\mathbf{x})-T_{3,0}(\mathbf{x})\|\le A_3(\yvar)\|\mathbf{x}\|^3
\]
for some function $A_3$ that satisfies $\lim_{\yvar \to 0} A_3(\yvar)=0$.

Putting this together and making use of the triangle inequality, we obtain that for any $0<\cvar\leq \min\{\cvar_1,\cvar'_0\}$ and any $\mathbf{x} = (x_1,\ldots,x_{q-1})\in\Dvar_\cvar$
\begin{align*}
    \|\Phi_{\yvar}(\mathbf{x})-\Phi(\mathbf{x})\|&\le\|\Phi_{\yvar}(\mathbf{x})-\textrm{Id}(\mathbf{x})-T_{3,\yvar}(\mathbf{x})\|\\
    &+\|\textrm{Id}(\mathbf{x})+T_{3,\yvar}(\mathbf{x})-\textrm{Id}(\mathbf{x})-T_{3,0}(\mathbf{x})\|\\
    &+\|\textrm{Id}(\mathbf{x})+T_{3,0}(\mathbf{x})- \Phi(\mathbf{x})\| \\
    &\le 2C_1\|x\|^4+A_3(y))\|x\|^3\leq K(2C_1c+A_3(y))c^3,
\end{align*}
for some constant $K>0$ (using that the $2$-norm and the $1$-norm are equivalent on $\mathbb{R}^{q-1}$.)
Now let us fix $0<\cvar_0\leq \min \{\cvar_1,\cvar_0'\}$ small enough such that $K2C_1\cvar_0<A/4$ and fix a $\yvar_0>0$ such that for any any $0\le \yvar\le \yvar_0$ we have $K A_3(\yvar)\le A/4$.

Then by \eqref{eq:strict inequality 2-forward}, for any $0\le \yvar \le \yvar_0$, $0\le \cvar \le \cvar_0$ and $\mathbf{x} = (x_1,\ldots,x_{q-1})\in \Dvar_\cvar$, 
\begin{align*}
    \Phi_\yvar(\Dvar_\cvar)\subseteq B_{Ac^3/2}(\Phi(\Dvar_\cvar))\cap (-\infty,0]^{q-1} \subseteq \Dvar_{\phi(\cvar)+A\cvar^3/2}\subsetneq \Dvar_\cvar.
\end{align*}
So we can take $\cvar'=\phi(\cvar)+A\cvar^3/2$.
\end{proof}

\subsection{Proof of Proposition~\ref{prop:2stepforward}}
We are now ready to prove Proposition~\ref{prop:2stepforward}, which we restate here for convenience. 

\begin{repprop}{prop:2stepforward}
Let $q\geq 3$ be an integer. There exists $d_2>0$ such that for all integers $d\geq d_2$ the following holds:
for any $\cvar\in(0,q+1]$ there exists $0<\cvar'<\cvar$ such that
\[
F_d^{\circ 2}(P_\cvar)\subset P_{\cvar'}.
\]
\end{repprop}
\begin{proof}
We know by Proposition~\ref{prop:forwardunder2iterates q=3 w=w_c} there is a $d_0>0$ and a $\cvar_0 >0$ such that for $d \geq d_0$ and $\cvar \in (0, \cvar_0)$ there exist $\cvar'<\cvar$ such that $F_d^{\circ 2}(D_\cvar) \subset D_{\cvar'}$.
As $P_\cvar = \cup_{\pi \in S_q} \pi \cdot D_\cvar$, we see by Lemma \ref{lem:symmetry} that for  $d \geq d_0$ and $\cvar \in (0, \cvar_0)$ we have $F_d^{\circ 2}(P_\cvar) \subset P_{\cvar'}$.

Next we consider $\cvar \in [\cvar_0, q+1]$. 
By Proposition~\ref{prop:2step-forward infinity} we know  $F_\infty^{\circ 2}(P_\cvar) \subset P_{\phi(\cvar)}$ and $\phi(\cvar)<\cvar$ for any $\cvar>0$. 
As $F_d$ converges to $F_\infty$ uniformly, we see for each $\cvar \in [\cvar_0, q+1]$ there is a $d_\cvar>0$ large enough such that for $d \geq d_\cvar$ and $\cvar'=\cvar/2+\phi(\cvar)/2$ we have $F_d^{\circ 2}(P_{\hat{\cvar}}) \subsetneq P_{\cvar'}$ for all $\hat{\cvar}$ sufficiently close to $\cvar$. 
By compactness of $[\cvar_0, q+1]$, we obtain that there is a $d_{\text{max}} >0$ such that for any $d> d_{\text{max}}$ and any $\cvar \in [\cvar_0, q+1]$ there exists $\cvar'<\cvar$ such that $F_d^{\circ 2}(P_\cvar) \subsetneq P_{\cvar'}$.
The proposition now follows by taking $d_2 = \max ( d_0, d_{\text{max}} ) $.
\end{proof}

\section{Concluding remarks}
Although we have only proved uniqueness of the Gibbs measure on the infinite regular tree for a sufficiently large degree $d$, our method could conceivably be extended to smaller values of $d$. With the aid of a computer we managed to check that for $q=3$ and $q=4$ and all $d\geq 2$ the map $F_d^{\circ 2}$ maps $P_c$ into $P_{\phi_d(-c)}$, where $\phi_d$ is the restriction of $-F_d^{\circ 2}$ to the line $\mathbb{R}\cdot \mathbf{1}$. 
It seems reasonable to expect that for other small values of $q$ a similar statement could be proved. A general approach is elusive so far.
It is moreover also not clear that $F_d(P_c)$ is convex, not even for $q=3$. 
In fact, for $q=3$ and $c$ large enough $F_3(P_c)$ is {\bf not} convex. 
But for reasonable values of $c$ it does appear to be convex. For larger values of $q$ this is even less clear.

Knowing that there is a unique Gibbs measure on the infinite regular tree is by itself not sufficient to design efficient algorithms to approximately compute the partition function/sample from the associated distribution on all bounded degree graphs.
One needs a stronger notion of decay of correlations, often called \emph{strong spatial mixing}~\cite{weitz06,Gamarnikcounting,GamarnikSSM,lu2013improved} or absence of complex zeros for the partition function near the real interval $[w,1]$~\cite{Barbook,PaR17,bencs,liu2019correlation}.
It is not clear whether our current approach is capable of proving such statements (these certainly do not follow automatically), but we hope that it may serve as a building block in determining the threshold(s) for strong spatial mixing and absence of complex zeros. 
We note that even for the case $w=0$, corresponding to proper colorings, the best known bounds for strong spatial mixing on the infinite tree~\cite{SSMcoloring} are still far from the uniqueness threshold. 
Very recently (after the current article was posted to the arXiv) these bounds have been significantly improved~\cite{chen2023strong}. 

\section*{Acknowledgement}
We are grateful to the anonymous referees for constructive and useful feedback.

\begin{appendix}
\begin{section}{Supplementary Mathematica code to Lemma~\ref{lemma:3variable}}
\label{sec: Mathematica code}
The functions $A_i$ for $i=1,2,3$ and $\Delta$ are defined as follows.

\begin{lstlisting}[language=Mathematica,caption={The functions $A_i$ and $\Delta$},label = {code: Ai Delta}]
A1[y1_, y2_, y3_, m_] := (m + 1) (1 - y1)/(1 + l y1 + y2 + (m - (l + 1)) y3)
A2[y1_, y2_, y3_, m_] := (m + 1) (1 - y2)/(1 + l y1 + y2 + (m - (l + 1)) y3)
A3[y1_, y2_, y3_, m_] := (m + 1) (1 - y3)/(1 + l y1 + y2 + (m - (l + 1)) y3)

Delta[y1_, y2_, y3_, m_] := (y1 y3 (m - l - 1) + (l + 1) y1 + (l + 1) y1 y2 - l y2) Exp[A1[y1, y2, y3, m]] 
+ (-y2 y3 (m - l - 1) - (l + 1) y1 y2 + y1 - 2 y2) Exp[A2[y1, y2, y3, m]] 
+ (y1 - y2) (1 - y3) (m - l - 1) Exp[A3[y1, y2, y3, m]]
\end{lstlisting}
The functions $y_i(t)$ are defined as follows.

\begin{lstlisting}[language=Mathematica,caption={The functions $y_i$}, label = {code: yi}]
{y1[t_], y2[t_], y3[t_]} = {y1, y2, y3} /. Solve[A1[y1, y2, y3, t] == C1 && A2[y1, y2, y3, t] == C2 && A3[y1, y2, y3, t] == C3, {y1, y2, y3}][[1]]
\end{lstlisting}

\begin{lstlisting}[language=Mathematica,caption={Verification that $y_i(q-1) = x_i$. This expression yields $\{x_1,x_2,x_3\}$}, label = {code: yi2}]
Simplify[{y1[q - 1], y2[q - 1], y3[q - 1]} /. {Rule[C1, A1[x1, x2, x3, q - 1]], Rule[C2, A2[x1, x2, x3, q - 1]], Rule[C3, A3[x1, x2, x3, q - 1]]}]
\end{lstlisting}

The function $r(t)$ can subsequently be found with the following code.

\begin{lstlisting}[language=Mathematica,caption={The function r}, label = {code: r}]
r[t_] = Simplify[Delta[y1[t], y2[t], y3[t], t] ((1 + t)/(1 + C2 - C3 + C1 l - C3 l + t + C3 t))^(-2)]
\end{lstlisting}

It can be observed that $r$ is indeed linear in $t$. To calculate $r(l+1)$ and the slope of $r$
we use the following piece of code.

\begin{lstlisting}[language=Mathematica,caption={The values of $r(l+1)$ and the slope of $r$}, label = {code: r slope}]
Simplify[{r[l + 1], Coefficient[r[t], t]}]
\end{lstlisting}

\end{section}
\end{appendix}

\bibliographystyle{alpha}
\bibliography{biblio}

\end{document}